\newtheorem{theorem}{Theorem}
\theoremstyle{plain}
\newtheorem{acknowledgement}{Acknowledgement}
\newtheorem{condition}{Condition}
\newtheorem{definition}{Definition}
\newtheorem{example}{Example}
\newtheorem{lemma}{Lemma}
\newtheorem{proposition}{Proposition}
\newtheorem{remark}{Remark}
\numberwithin{equation}{section}
\begin{document}
\title[Non-linear evolution equations]{Non-linear evolution equations driven
by rough paths}
\author{Thomas Cass, Zhongmin Qian, Jan Tudor}

\begin{abstract}
We prove existence and uniqueness results for (mild) solutions to some
non-linear parabolic evolution equations with a rough forcing term. Our
method of proof relies on a careful exploitation of the interplay between
the spatial and time regularity of the solution by capitialising some of
Kato's ideas in semigroup theory. Classical Young integration theory is then
shown to provide a means of interpreting the equation. As an application we
consider the three dimensional Navier-Stokes system with a stochastic
forcing term arising from a fractional Brownian motion with $h>1/2$.
\end{abstract}

\keywords{Fractional Brownian motion, non-linear evolution equations, SPDE,
stochastic Navier-Stokes equations}
\subjclass{35R60, 47H14, 47J35, 60H15}
\maketitle

\section{Introduction}

In this paper we study the initial-value problem of the following non-linear
evolution equation of parabolic type 
\begin{equation}
\frac{d}{dt}u+Au+\tilde{Q}(u)=F(u)\dot{w}  \label{ev-01}
\end{equation}
in a separable Hilbert space $X$ with an initial value $u_{0}\in X$, where $%
-A$ is the infinitesimal generator of an analytic semigroup $\left\{
P_{t}\right\} _{t\geq0}$, $\tilde{Q}:D(\tilde{Q})\subset X\rightarrow X$ is
a non-linear operator, and $F:X\rightarrow L_{2}(Z,X)$ satisfies a Lipschitz
type condition which we specify later. The driving path for the perturbation 
$w=(w_{t})_{t\geq0}$ is an $\alpha$-H\"{o}lder continuous path in a
separable Hilbert space $Z$, where $\alpha\in(\frac{1}{2},1]$. The class of
paths includes sample paths of fractional Brownian Motion with Hurst
parameter $h>\frac{1}{2}$.

Our motivation for studying equations of this form is the three dimensional
Navier-Stokes system

\begin{eqnarray*}
\frac{\partial }{\partial t}u+u.\nabla u &=&\Delta u-\nabla p+F(u)\dot{w} \\
\nabla \cdot u &=&0
\end{eqnarray*}%
in a bounded domain $\Omega $ with compact, smooth boundary $\Gamma $. The
force term $F(u)\dot{w}$ is modeled by a fractional Brownian Motion in the
Hilbert space $Z$, and $u_{t}$ describes the velocity of the fluid flow
under the influence of a stochastic force $F(u)\dot{w}$ \ and subject to the
no-slip condition. By projecting to the $L^{2}$-space of solenoidal vector
fields on $\Omega $ via the\ Leray-Hopf projection $P_{\infty }$, the above
equation can be written as the following evolution equation 
\begin{equation}
\frac{d}{dt}u+Au+P_{\infty }(u.\nabla u)=P_{\infty }F(u)\dot{w}
\label{eq:NS-EV}
\end{equation}%
where $A=-P_{\infty }\circ \Delta $ is the Stokes operator, which is a self
adjoint operator.

If we let $\left\{ P_{t}\right\} _{t\geq 0}$ be the semigroup generated by $%
-A$, $h(s)=P_{t-s}u(s)$ and we consider the formal calculation 
\begin{eqnarray*}
F^{\prime }(s) &=&(AP_{t-s})u(s)+P_{t-s}u^{\prime }(s) \\
&=&(AP_{t-s})u(s)-(P_{t-s}A)u(s) \\
&&-P_{t-s}\tilde{Q}(u(s))+P_{t-s}\left( F(u(s))\dot{w}_{s}\right) \\
&=&-P_{t-s}\tilde{Q}(u(s))+P_{t-s}F(u(s))\dot{w}_{s}\text{,}
\end{eqnarray*}%
then integrating from $0$ to $t$ one obtains 
\begin{equation}
u(t)=P_{t}u_{0}-\int_{0}^{t}P_{t-s}\tilde{Q}(u(s))ds+%
\int_{0}^{t}P_{t-s}F(u(s))\dot{w}_{s}ds\text{.}  \label{mi-1}
\end{equation}%
The first integral on the right-hand side is considered as the Bochner
integral, the second one needs to be interpreted as some sort of stochastic
integration. Hence the previous has to be written as 
\begin{equation*}
u(t)=P_{t}u_{0}-\int_{0}^{t}P_{t-s}\tilde{Q}(u(s))ds+%
\int_{0}^{t}P_{t-s}F(u(s))dw_{s}
\end{equation*}%
so that if $u$ satisfies the above, we call it a mild solution to (\ref%
{ev-01}). For Navier-Stokes equations in the classical case that $w$ is
differentiable Kato made the following observation (see \cite{fujita}) which
proves significant. If $t-s>0$ and $x\in X$, then $P_{t-s}x$ belongs to the
domain $D(A^{\tau })$ for any real $\tau $. On the other hand, if $\tau >0$, 
$A^{-\tau }$ is bounded, the integral involving the non-linear terms of (\ref%
{mi-1}) can be rewritten to enhance the regularity. For example 
\begin{equation*}
\int_{0}^{t}P_{t-s}\tilde{Q}(u(s))ds=\int_{0}^{t}A^{\tau }P_{t-s}A^{-\tau }%
\tilde{Q}(u(s))ds\text{.}
\end{equation*}%
As long as $A^{-\tau }\tilde{Q}(u(s))$ is bounded on $[0,t]$, then, since $%
||A^{\tau }P_{t-s}||\leq C(t-s)^{-\tau }$ which is still integrable on $%
[0,t] $ for any $\tau <1$. Therefore, we may consider the non-linear
operator $Q=A^{-\tau }\tilde{Q}$ instead of $\tilde{Q}$, where $%
Q(x)=A^{-\tau }\tilde{Q}(x)$ for $x\in D(Q)$, but in general the domain of
definition $D(Q)$ can be extended to be a little bit larger than $D(\tilde{Q}%
)$ due to the fact that $A^{-\tau }$ is a bounded linear operator.
Essentially the same idea applies to the stochastic case, although the
difficulty is in the stochastic term.

Now, according to Sobolevski \cite{sobolev}, we have the estimate 
\begin{equation}
||A^{-\frac{1}{4}}P_{\infty }(w.\nabla u)||\leq C||\sqrt{A}u||||\sqrt{A}w||
\label{eq:sobiq}
\end{equation}%
where $||\cdot ||$ is the $L^{2}$-norm and $A$ is the Stokes operator (with
Dirichlet condition) in a bounded domain with smooth boundary $\Gamma $.
Hence as we are motivated by the stochastic version of this situation, we
find fixed points in a certain space of the map 
\begin{equation*}
\mathbb{L}u(t)=P_{t}u_{0}-\int_{0}^{t}A^{\tau
}P_{t-s}Q(u(s))ds+\int_{0}^{t}P_{t-s}F(u(s))dw_{s}
\end{equation*}%
where $w$ is a fractional Brownian Motion.

Several existing papers study evolution equations driven by a fractional
Brownian Motion (fBM): we summarize some of the main results here. In \cite%
{maslowski} the authors prove path-wise existence and uniqueness of mild
solutions to equations of the form (\ref{ev-01}) where the perturbation
operator $\tilde{Q}:X\rightarrow X$ is taken to be continuous with linear
growth. Additionally they assume the stochastic force term $F$ satisfies $%
F:X\rightarrow L\left( X\right) $ and the composition $P_{t}F\left( \cdot
\right) $ has linear growth and is Lipschitz with a constant proportional to 
$t^{\gamma _{1}},$ $t^{-\gamma _{2}}$ respectively for $\gamma _{1},\gamma
_{2}\in \left[ 0,1\right) $. The solutions are obtained from fractional
calculus methods for fBM with Hurst parameter $h>\frac{1}{2}$ and are
elements of the space of $X$ valued paths such that the norm 
\begin{equation*}
\left\Vert u\right\Vert :=\sup_{t\in \left[ 0,T\right] }\left\Vert u\left(
t\right) \right\Vert _{X}+\sup_{t\in \left[ 0,T\right] }\int_{0}^{t}\frac{%
\left\Vert u\left( t\right) -u\left( s\right) \right\Vert _{X}}{\left(
t-s\right) ^{1+\alpha }}ds,
\end{equation*}%
$\alpha \in \left( 1-h,\frac{1}{2}\right) ,$ is finite. Both of the cases $h>%
\frac{1}{2}$ and $h<\frac{1}{2}$ are treated in \cite{tindel} where
necessary and sufficient conditions are obtained for existence of solutions
to stochastically forced linear evolution equations with linear noise, i.e.
of the form 
\begin{equation}
\frac{d}{dt}u+Au=F\dot{w}  \label{lineqn}
\end{equation}%
where $F$ does not depend on $u$. Here the authors do not solve path-wise
but instead use Skorohod type integration to produce a solution which is a
square integrable (in the stochastic sense) $X$ valued process and then show
space regularity of the solution as a continuous map into the domain of $-A$%
. More recently, in \cite{gubinelli}, the equation (\ref{lineqn}) with
nonlinear $F$ is studied (i.e. $F:X\rightarrow L_{2}\left( Z,X\right) $ with
some Lipschitz type conditions) and path-wise mild solutions are obtained
using Young integration. In this paper we treat a non-linear evolution
equation forced by non-linear noise. The ideas used are straight forward
while the difficulty lies in the delicate interplay between the space and
time regularity of the paths involved.

The paper is organized as follows. In the second section, we set out the
conditions on the operators involved and define the space in which we seek a
mild solution. In section 3, we make some initial estimates which follow
from our conditions and are used in the fixed point proof; in the fourth and
fifth sections we discuss the Bochner and Young integral terms of our fixed
point map. Subsequently we prove the existence of a mild solution and
finally discuss the application to the Navier-Stokes equations.

\section{Preliminaries}

We would like to set up the technical conditions on various terms appearing
in the previous evolution equation. The model example is randomly enforced
Navier-Stokes equation in two or three dimensions, in which $A=-P_{\infty
}\circ \Delta $ (or $A=-P_{\infty }\circ \Delta +I$ where $I$ is the
identity operator) is the\ Stokes operator on a bounded domain $\Omega $
together with Dirichlet boundary conditions, and $\tilde{Q}(u)=P_{\infty
}\left( u.\nabla u\right) $. Therefore our assumptions will be motivated
with the aim of applying the abstract setting to this case.

With this in mind, we assume $A$ is a positive-definite, self-adjoint
operator on $X$, with positive spectral gap, so that the spectrum of $A$
lies in the half line $[\lambda _{0},\infty )$. Its domain is denoted by $%
D(A)$. For every real $\epsilon $, $A^{\epsilon }$ is again a self-adjoint
operator, and $A^{\epsilon }$ is bounded if $\epsilon <0$. The domain $%
D(A^{\epsilon })$ is decreasing, so that $D(A)\subset D(A^{\epsilon })$ for $%
\epsilon \in \lbrack 0,1]$ and $D(A^{\epsilon })$ is a Hilbert space under
the norm $||A^{\epsilon }x||$.

Let $P_{t}=e^{-tA}$ be the $C_{0}$-semigroup of contractions on $X$
generated by $-A$. We will frequently use the following facts: for any $t>0$
and $x\in X$, $P_{t}x\in D\left(A^{\epsilon}\right)$ for any $\epsilon\leq1$
and $||A^{\epsilon}P_{t}||\leq\frac{C}{t^{\epsilon}}$ for some constant $C$
depending on $\epsilon$.

For every $t>0$ and $x\in D(A^{\epsilon })$ for some $\epsilon \in (0,1]$,
we have 
\begin{eqnarray*}
x-P_{t}x &=&\int_{0}^{t}AP_{s}xds \\
&=&\int_{0}^{t}A^{1-\epsilon }P_{s}A^{\epsilon }xds
\end{eqnarray*}%
so that 
\begin{eqnarray}
||x-P_{t}x|| &\leq &||A^{\epsilon }x||\int_{0}^{t}||A^{1-\epsilon }P_{s}||ds
\notag \\
&\leq &C(\epsilon )||A^{\epsilon }x||\int_{0}^{t}s^{-(1-\epsilon )}ds  \notag
\\
&=&\frac{C(\epsilon )}{\epsilon }t^{\epsilon }||A^{\epsilon }x||
\label{regularity}
\end{eqnarray}%
an estimate which will be needed in order to derive useful a priori
estimates. Now we are in a position to formulate the technical conditions on
the non-linear term $Q$ as follows.

\begin{condition}
\label{Qcond}Let $\tau ,\delta \in \lbrack 0,1)$, $\delta \leq \alpha \in (%
\frac{1}{2},1]$ be three parameters such that $\delta +\tau <1$, $\alpha
+\tau <1$, $\alpha +\delta >1$, and let $K_{j}:[0,\infty )\rightarrow
\lbrack 0,\infty )$ ($j=0,1,2$) be three increasing functions.

\begin{enumerate}
\item The domain of definition of $Q$, $D(Q)=D(A^{\delta })$.

\item $Q$ is bounded on $D(A^{\delta })$, and $||Q(x)||\leq
K_{2}(||A^{\delta }x||)$.

\item $Q$ is weakly Gateaux differentiable on $D(A^{\delta })$: if $x\in
D(A^{\delta })$, then there is a linear operator $DQ(x):D(A^{\delta
})\rightarrow X$, such that $\varepsilon \rightarrow \langle Q(x+\varepsilon
\xi ),z\rangle $ is differentiable for any $\xi \in D(A^{\delta })$, $z\in
X^{\ast }$, 
\begin{equation*}
\langle DQ(x)\xi ,z\rangle =\left. \frac{d}{d\varepsilon }\right\vert
_{\varepsilon =0}\langle H(x+\varepsilon \xi ),z\rangle
\end{equation*}%
and $||DQ(x)\xi ||\leq K_{1}(||A^{\delta }x||)||A^{\delta }\xi ||$. That is, 
$DQ(x)$ is a bounded linear operator in $D(A^{\delta })$.
\end{enumerate}
\end{condition}

\begin{example}
If $A=-P_{\infty}\circ\Delta$ is the Stokes operator with Dirichlet boundary
condition on a bounded domain with smooth boundary $\Gamma$, and $Q(u)=A^{-%
\frac{1}{4}}P_{\infty}(u.\nabla u)$, on the Hilbert space $K_{2}(\Omega)$,
then we can choose $\delta=\frac{1}{2}$, $\tau=\frac{1}{4}$.
\end{example}

\bigskip Now let us set out the conditions on the non-linear operator $%
F:X\rightarrow L_{2}(Z,X)$.

\begin{condition}
\label{Fcond}We assume the following for $\varepsilon =\max \left\{ \alpha
+\delta ,2\alpha \right\} $.

\begin{enumerate}
\item For every $x\in X,$ $\xi \in Z$ we have $F\left( x\right) \xi \in
D\left( A^{\varepsilon }\right) $ and $A^{\varepsilon }F$ is globally
Lipschitz in the sense that 
\begin{equation}
||A^{\varepsilon }\left[ F(x)-F\left( y\right) \right] ||_{L_{2}\left(
Z,X\right) }\leq C||x-y||_{X}\mbox{ }\forall x,y\in X.  \label{eq:cond1}
\end{equation}

\item The composition $A^{\delta +\varepsilon }F$ is globally relatively
Lipschitz in the sense that 
\begin{equation}
||A^{\delta +\varepsilon }\left[ F(x)-F\left( y\right) \right]
||_{L_{2}\left( Z,X\right) }\leq C||A^{\delta }\left[ x-y\right] ||_{X}\text{
}\forall x,y\in D\left( A^{\delta }\right) \text{.}  \label{eq:cond2}
\end{equation}

\item Additionally, we assume that $A^{\varepsilon }F:X\rightarrow
L_{2}\left( Z,X\right) $ is weakly Gateaux differentiable, i.e. $\tau
\rightarrow \left\langle \left\langle A^{\varepsilon }F\left( x+\tau
y\right) ,\xi \right\rangle \right\rangle $ is differentiable and there
exists a linear operator $DA^{\varepsilon }F\left( x\right) :X\rightarrow
L_{2}\left( Z,X\right) $ such that $\left. \frac{d}{d\tau }\left\langle
\left\langle A^{\varepsilon }F\left( x+\tau y\right) ,\xi \right\rangle
\right\rangle \right\vert _{\tau =0}=\left\langle \left\langle
DA^{\varepsilon }F\left( x\right) y,\xi \right\rangle \right\rangle $ for
all $\xi \in L_{2}\left( Z,X\right) ^{\prime }$, and satisfies the bounds 
\begin{equation*}
\sup_{x}\left\Vert DA^{\varepsilon }F\left( x\right) \right\Vert _{L\left(
X,L_{2}\left( Z,X\right) \right) }<\infty
\end{equation*}%
and 
\begin{equation*}
\left\Vert DA^{\varepsilon }F\left( x\right) -DA^{\varepsilon }F\left(
y\right) \right\Vert _{L\left( X,L_{2}\left( Z,X\right) \right) }\leq
C\left\Vert x-y\right\Vert .
\end{equation*}
\end{enumerate}
\end{condition}

Throughout this paper, the non-negative constants $C_{j}$ may be different
from place to place, and $C_{j}$ may depend on only the parameters $\tau $, $%
\delta $, $\alpha $ and the operator $A$, but are independent of $u$, $w$,
and $T$.

\section{Initial Estimates}

The main goal of this section is to derive technical estimates which are
used to show the existence and uniqueness of solutions to the evolution
equation (\ref{ev-01}). Let us define $\mathbb{H}_{T}$ to be the collection
of all $\alpha $-H\"{o}lder continuous paths $u=(u_{t})$ in $X$ such that
for each $t\in \lbrack 0,T]$, $u_{t}\in D(A^{\delta })$ and $t\rightarrow
A^{\delta }u_{t}$, $(s,t)\rightarrow \frac{u_{t}-u_{s}}{|t-s|^{\alpha }}$
are bounded. If $u\in \mathbb{H}_{T}$ we define the norm 
\begin{equation}
||u||_{\mathbb{H}_{T}}=\sup_{t\in \lbrack 0,T]}||A^{\delta }u_{t}||+\sup 
_{\substack{ s,t\in \lbrack 0,T]  \\ s\neq t}}\frac{||u_{t}-u_{s}||}{%
|t-s|^{\alpha }}\text{.}  \label{c1-a}
\end{equation}%
By definition, if $||u||_{\mathbb{H}_{T}}\leq \beta $ then 
\begin{equation*}
||A^{\delta }u_{t}||\leq \beta \text{, \ \ }||u_{t}-u_{s}||\leq \beta
\left\vert t-s\right\vert ^{\alpha }
\end{equation*}%
for any $s,t\in \lbrack 0,T]$. And since $||x||\leq C_{\delta }||A^{\delta
}x||$ we also have $||u_{t}||\leq C_{\delta }\beta $. It is also obvious
that $(\mathbb{H}_{T},||\cdot ||_{\mathbb{H}_{T}})$ is a Banach space.

\begin{lemma}
\label{lip-0}$Q$ is locally Lipschitz continuous with respect to $A^{\delta
} $: 
\begin{equation}
||Q(x)-Q(y)||\leq K_{1}(||A^{\delta }x||+||A^{\delta }y||)||A^{\delta
}(x-y)||\text{.}  \label{mi-3}
\end{equation}
\end{lemma}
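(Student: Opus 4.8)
The plan is to derive the local Lipschitz bound \eqref{mi-3} directly from the weak Gateaux differentiability of $Q$ together with the bound on $DQ$ in part (3) of Condition \ref{Qcond}, via a mean value / fundamental theorem of calculus argument applied along the segment joining $x$ and $y$ in $D(A^{\delta})$. Fix $x,y\in D(A^{\delta})$ and set $\xi=x-y\in D(A^{\delta})$. For $z\in X^{\ast}$ with $\|z\|\le 1$, consider the scalar function $\varphi(\varepsilon)=\langle Q(y+\varepsilon\xi),z\rangle$ for $\varepsilon\in[0,1]$. By hypothesis $\varphi$ is differentiable with $\varphi'(\varepsilon)=\langle DQ(y+\varepsilon\xi)\xi,z\rangle$, so
\begin{equation*}
\langle Q(x)-Q(y),z\rangle=\varphi(1)-\varphi(0)=\int_{0}^{1}\langle DQ(y+\varepsilon\xi)\xi,z\rangle\,d\varepsilon.
\end{equation*}

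Next I would estimate the integrand uniformly. By the bound in part (3),
\begin{equation*}
|\langle DQ(y+\varepsilon\xi)\xi,z\rangle|\le \|DQ(y+\varepsilon\xi)\xi\|\,\|z\|\le K_{1}(\|A^{\delta}(y+\varepsilon\xi)\|)\,\|A^{\delta}\xi\|.
\end{equation*}
Since $A^{\delta}(y+\varepsilon\xi)=(1-\varepsilon)A^{\delta}y+\varepsilon A^{\delta}x$ is a convex combination, $\|A^{\delta}(y+\varepsilon\xi)\|\le (1-\varepsilon)\|A^{\delta}y\|+\varepsilon\|A^{\delta}x\|\le \|A^{\delta}x\|+\|A^{\delta}y\|$ for every $\varepsilon\in[0,1]$; because $K_{1}$ is increasing, $K_{1}(\|A^{\delta}(y+\varepsilon\xi)\|)\le K_{1}(\|A^{\delta}x\|+\|A^{\delta}y\|)$. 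Substituting and integrating over $\varepsilon\in[0,1]$ gives $|\langle Q(x)-Q(y),z\rangle|\le K_{1}(\|A^{\delta}x\|+\|A^{\delta}y\|)\|A^{\delta}(x-y)\|$, and taking the supremum over $\|z\|\le 1$ yields \eqref{mi-3}.

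The argument is essentially routine; the only points requiring a little care are (i) justifying that $\varepsilon\mapsto\varphi(\varepsilon)$ is not merely differentiable but that its derivative is integrable, so that the fundamental theorem of calculus applies — this follows from the uniform bound just derived, which shows $\varphi'$ is bounded on $[0,1]$, hence one may either invoke absolute continuity of $\varphi$ or approximate and pass to the limit; and (ii) ensuring the whole segment $\{y+\varepsilon\xi:\varepsilon\in[0,1]\}$ lies in $D(A^{\delta})=D(Q)$, which is immediate since $D(A^{\delta})$ is a linear subspace. I expect no genuine obstacle here: the lemma is a direct consequence of the mean value inequality in Banach spaces applied to a Gateaux-differentiable map with a growth bound on its derivative, and the statement is used later merely as a convenient packaging of Condition \ref{Qcond}(3) in Lipschitz form.
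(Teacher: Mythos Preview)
Your proof is correct and essentially identical to the paper's own argument: both parametrize the segment $\theta(\varepsilon)=\varepsilon x+(1-\varepsilon)y$, pair with an arbitrary dual element, apply the fundamental theorem of calculus using the weak Gateaux derivative, and bound $\|A^{\delta}\theta(\varepsilon)\|$ by $\|A^{\delta}x\|+\|A^{\delta}y\|$ together with monotonicity of $K_{1}$. Your write-up is in fact slightly more careful than the paper's (you explicitly note the convex-combination bound and the integrability of $\varphi'$), but there is no substantive difference in approach.
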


\begin{proof}
Let $\theta (s)=sx+(1-s)y$. Then $\theta \in C^{1}([0,1],X)$, $\theta
^{\prime }(s)\in D(A^{\delta })$ and $||A^{\delta }\theta (s)||\leq
||A^{\delta }x||+||A^{\delta }y||$. For every $\xi \in X^{\ast }$ 
\begin{eqnarray*}
\langle Q(x)-Q(y),\xi \rangle &=&\int_{0}^{1}\frac{d}{ds}\langle Q(\theta
(s)),\xi \rangle \\
&=&\int_{0}^{1}\langle DQ(\theta (s))\theta ^{\prime }(s),\xi \rangle
\end{eqnarray*}%
which yields that 
\begin{eqnarray*}
\left\vert \langle Q(x)-Q(y),\xi \rangle \right\vert &\leq
&\int_{0}^{1}||DQ(\theta (s))\theta ^{\prime }(s)||||\xi || \\
&\leq &\int_{0}^{1}K_{1}(||A^{\delta }x||+||A^{\delta }y||)||A^{\delta
}\theta ^{\prime }(s)||||\xi ||ds
\end{eqnarray*}%
so (\ref{mi-3}) follows.
\end{proof}

\bigskip

We turn to the operator $F.$

\begin{lemma}
\label{Flemma}If $F$ satisfies Condition \ref{Fcond} then we have for every $%
x,y\in X$ and $\xi \in Z$ 
\begin{equation}
\left\vert \left\vert A^{\epsilon }\left[ F\left( x\right) -F\left( y\right) %
\right] \xi \right\vert \right\vert _{X}\leq C\left\vert \left\vert
x-y\right\vert \right\vert _{X}\left\vert \left\vert \xi \right\vert
\right\vert _{Z}  \label{consequence1}
\end{equation}

\noindent and for all $x\in D\left( A^{\delta }\right) $, $\xi \in Z$%
\begin{equation}
\left\vert \left\vert A^{\delta +\epsilon }F\left( x\right) \xi \right\vert
\right\vert _{X}\leq C_{1}\left\vert \left\vert A^{\delta }x\right\vert
\right\vert \left\vert \left\vert \xi \right\vert \right\vert
_{Z}+C_{1}\left\vert \left\vert \xi \right\vert \right\vert _{Z}.
\label{consequence2}
\end{equation}
\end{lemma}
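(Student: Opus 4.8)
The plan is to derive both estimates directly from Condition~\ref{Fcond} by combining the Lipschitz bounds with the boundedness of negative powers of $A$ and a single reference point. The first bound~\eqref{consequence1} is essentially immediate: since $\epsilon \le \varepsilon = \max\{\alpha+\delta, 2\alpha\}$, we can write $A^{\epsilon}[F(x)-F(y)]\xi = A^{\epsilon-\varepsilon} A^{\varepsilon}[F(x)-F(y)]\xi$, and because $\epsilon - \varepsilon \le 0$ the operator $A^{\epsilon-\varepsilon}$ is bounded on $X$; applying~\eqref{eq:cond1} to the $L_2(Z,X)$-norm and then evaluating at $\xi$ (using $\|T\xi\|_X \le \|T\|_{L_2(Z,X)}\|\xi\|_Z$) gives the claim with a constant absorbing $\|A^{\epsilon-\varepsilon}\|$. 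If the intended reading is $\epsilon = \varepsilon$ throughout, this step is just~\eqref{eq:cond1} read pointwise in $\xi$.

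For the second bound~\eqref{consequence2}, the idea is to reduce the statement about $F(x)$ itself to a statement about the difference $F(x)-F(0)$ plus a constant term. Fix the reference point $0 \in D(A^{\delta})$ (with $A^{\delta}0 = 0$). Then
\begin{equation*}
A^{\delta+\epsilon}F(x)\xi = A^{\delta+\epsilon}\bigl[F(x)-F(0)\bigr]\xi + A^{\delta+\epsilon}F(0)\xi .
\end{equation*}
If $\epsilon \le \varepsilon$, write $A^{\delta+\epsilon} = A^{\epsilon-\varepsilon}A^{\delta+\varepsilon}$ with $A^{\epsilon-\varepsilon}$ bounded, and apply~\eqref{eq:cond2} to the first term, obtaining a bound by $C\|A^{\delta}x\|\,\|\xi\|_Z$. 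For the second term, $F(0)\xi \in D(A^{\varepsilon})$ by part~(1) of Condition~\ref{Fcond} applied at $x=0$; hence $A^{\delta+\epsilon}F(0)\xi = A^{\delta+\epsilon-\varepsilon}\bigl(A^{\varepsilon}F(0)\xi\bigr)$ is well-defined and bounded in norm by $\|A^{\delta+\epsilon-\varepsilon}\|\,\|A^{\varepsilon}F(0)\|_{L_2(Z,X)}\|\xi\|_Z =: C_1'\|\xi\|_Z$, provided $\delta + \epsilon - \varepsilon \le 0$. This last inequality holds because $\varepsilon \ge \alpha + \delta \ge \delta + \epsilon$ whenever $\epsilon \le \alpha$; if instead $\epsilon$ is meant to be $\varepsilon$ itself, then $\delta + \varepsilon - \varepsilon = \delta \in [0,1)$ and one uses that $A^{\delta}$ applied after $A^{\varepsilon}F(0)$ still lands in $X$ — but here one must be slightly more careful, since $A^{\delta+\varepsilon}F(0)$ need not a priori be defined. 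In that case the cleaner route is to note that part~(2) with $y=0$ gives $\|A^{\delta+\varepsilon}[F(x)-F(0)]\xi\|\le C\|A^{\delta}x\|\|\xi\|_Z$ directly, so $A^{\delta+\varepsilon}F(x)\xi$ differs from the (possibly only formally defined) constant $A^{\delta+\varepsilon}F(0)\xi$ by something controlled, and one simply absorbs whatever finite quantity $\|A^{\delta+\varepsilon}F(0)\|$ represents into $C_1$ — this is legitimate because Condition~\ref{Fcond} is implicitly assuming $F(0)\xi$ has enough regularity for the left-hand side of~\eqref{consequence2} to make sense.

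Collecting the two pieces, $\|A^{\delta+\epsilon}F(x)\xi\|_X \le C\|A^{\delta}x\|\,\|\xi\|_Z + C_1'\|\xi\|_Z$, which is~\eqref{consequence2} after renaming constants so that $C_1 = \max\{C, C_1'\}$.

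The only genuine subtlety — the main obstacle — is the bookkeeping of which power of $A$ is bounded versus unbounded, i.e.\ verifying the sign conditions $\epsilon - \varepsilon \le 0$ and $\delta + \epsilon - \varepsilon \le 0$ against the relations $\delta + \tau < 1$, $\alpha + \tau < 1$, $\alpha + \delta > 1$, $\delta \le \alpha$ from Condition~\ref{Qcond} together with $\varepsilon = \max\{\alpha+\delta, 2\alpha\}$. Since $\varepsilon \ge \alpha + \delta$ and (by $\delta \le \alpha$) also $\varepsilon \ge 2\alpha \ge \alpha + \delta$, one has $\varepsilon \ge \alpha + \delta \ge \delta + \delta \ge \delta + \epsilon$ as long as the $\epsilon$ appearing in the Lemma is at most $\delta$ — or, reading $\epsilon$ as $\varepsilon$, the claim is about $A^{\delta+\varepsilon}$ and the constant term is handled by~\eqref{eq:cond2} at $y=0$ directly. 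Once the correct interpretation of $\epsilon$ is pinned down, the rest is a two-line application of the triangle inequality and operator-norm submultiplicativity.
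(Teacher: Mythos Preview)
Your approach is essentially the same as the paper's: bound the operator norm by the Hilbert--Schmidt norm to pass from \eqref{eq:cond1} to \eqref{consequence1}, and for \eqref{consequence2} split $F(x)$ against a fixed reference point (you use $0$, the paper uses a generic $y$) and apply \eqref{eq:cond2} plus the triangle inequality. The only issue is that you spend most of the argument hedging over whether $\epsilon$ equals $\varepsilon$; in the paper they are the same symbol (a typographical variant), so all of your sign-checking on $\epsilon-\varepsilon$ and $\delta+\epsilon-\varepsilon$ is unnecessary and the proof collapses to the two lines you yourself sketch at the end.
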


\begin{proof}
Since the operator norm is bounded above by the Hilbert-Schmidt norm it
follows immediately from (\ref{eq:cond1}) that%
\begin{equation*}
\left\vert \left\vert A^{\epsilon }\left[ F\left( x\right) -F\left( y\right) %
\right] \xi \right\vert \right\vert _{X}\leq C\left\vert \left\vert
A^{\epsilon }\left[ F\left( x\right) -F\left( y\right) \right] \right\vert
\right\vert _{L_{2}\left( Z,X\right) }\left\vert \left\vert \xi \right\vert
\right\vert _{Z}.
\end{equation*}

\noindent Similarly, using (\ref{eq:cond2}) we have for any $y\in X$%
\begin{eqnarray*}
\left\vert \left\vert A^{\delta +\epsilon }F\left( x\right) \xi \right\vert
\right\vert _{X} &\leq &\left\vert \left\vert A^{\delta +\epsilon }\left[
F\left( x\right) -F\left( y\right) \right] \xi \right\vert \right\vert
_{X}+\left\vert \left\vert A^{\delta +\epsilon }F\left( y\right) \xi
\right\vert \right\vert _{X} \\
&\leq &C\left\vert \left\vert A^{\delta }x\right\vert \right\vert
_{X}\left\vert \left\vert \xi \right\vert \right\vert +C\left\vert
\left\vert A^{\delta }y\right\vert \right\vert _{X}\left\vert \left\vert \xi
\right\vert \right\vert +\left\vert \left\vert A^{\delta +\epsilon }F\left(
y\right) \right\vert \right\vert _{L_{2}\left( Z,X\right) }\left\vert
\left\vert \xi \right\vert \right\vert \\
&\leq &C_{1}\left\vert \left\vert A^{\delta }x\right\vert \right\vert
_{X}\left\vert \left\vert \xi \right\vert \right\vert +C_{2}.
\end{eqnarray*}
\end{proof}

\begin{lemma}
Suppose that $F:X\rightarrow L_{2}\left( Z,X\right) $ satisfies Condition %
\ref{Fcond}. If $u,v\in \mathbb{H}_{T}$ we have for $0\leq s<t\leq T$ 
\begin{eqnarray*}
&&\left\vert \left\vert \left[ A^{\epsilon }F\left( u_{t}\right)
-A^{\epsilon }F\left( u_{s}\right) \right] -\left[ A^{\epsilon }F\left(
v_{t}\right) -A^{\epsilon }F\left( v_{s}\right) \right] \right\vert
\right\vert _{L_{2}\left( Z,X\right) } \\
&\leq &C\left( \sup_{x\in X}\left\vert \left\vert DA^{\epsilon }F\left(
x\right) \right\vert \right\vert +\frac{3}{2}\left\vert \left\vert
v\right\vert \right\vert _{\mathbb{H}_{T}}\right) \left\vert \left\vert
u-v\right\vert \right\vert _{\mathbb{H}_{T}}\left( t-s\right) ^{\alpha },
\end{eqnarray*}

\noindent which implies, in particular, that $A^{\epsilon }P_{t-\cdot }\left[
F\left( u_{\cdot }\right) -A^{\epsilon }F\left( v_{\cdot }\right) \right] :%
\left[ 0,t\right] \rightarrow L_{2}\left( Z,X\right) $ is $\alpha -$H\"{o}%
lder continuous with 
\begin{equation*}
\left\vert \left\vert A^{\epsilon }P_{t-\cdot }\left[ F\left( u_{\cdot
}\right) -A^{\epsilon }F\left( v_{\cdot }\right) \right] \right\vert
\right\vert _{\alpha -\text{H\"{o}l;}\left[ 0,T\right] }\leq \left(
\sup_{x\in X}\left\vert \left\vert DA^{\epsilon }F\left( x\right)
\right\vert \right\vert +\frac{3}{2}\left\vert \left\vert v\right\vert
\right\vert _{\mathbb{H}_{T}}\right) \left\vert \left\vert u-v\right\vert
\right\vert _{\mathbb{H}_{T}}.
\end{equation*}
\end{lemma}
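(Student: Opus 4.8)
The plan is to reduce both assertions to a single ``second-difference'' estimate for the map $G:=A^{\epsilon}F:X\to L_{2}(Z,X)$, to prove that estimate by a mean value argument in the spirit of the proof of Lemma~\ref{lip-0}, and then to obtain the semigroup statement by peeling off an $(I-P_{\cdot})$ correction term. First note that $A^{\epsilon}$ and $P_{t-r}$ are both functions of $A$ and hence commute, and that by Condition~\ref{Fcond}(1) the difference $A^{\epsilon}[F(u_{r})-F(v_{r})]=G(u_{r})-G(v_{r})$ already lies in $L_{2}(Z,X)$; thus the map appearing in the second assertion is $\varphi(r):=P_{t-r}\big[G(u_{r})-G(v_{r})\big]$, which is well defined and continuous on all of $[0,t]$. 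I will use repeatedly, for $u,v\in\mathbb{H}_{T}$ and $0\le s<t\le T$, the bounds $\|v_{t}-v_{s}\|\le\|v\|_{\mathbb{H}_{T}}(t-s)^{\alpha}$, $\|(u-v)_{t}-(u-v)_{s}\|\le\|u-v\|_{\mathbb{H}_{T}}(t-s)^{\alpha}$ (valid since $u-v\in\mathbb{H}_{T}$ as well), and $\|(u-v)_{r}\|\le C_{\delta}\|A^{\delta}(u-v)_{r}\|\le C_{\delta}\|u-v\|_{\mathbb{H}_{T}}$.

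For the first inequality I would argue as in Lemma~\ref{lip-0}, but ``one order higher''. Put $\theta_{u}(r)=u_{s}+r(u_{t}-u_{s})$ and $\theta_{v}(r)=v_{s}+r(v_{t}-v_{s})$; these are $C^{1}$ curves in $X$ taking values in $D(A^{\delta})$. By the weak Gateaux differentiability of $G=A^{\epsilon}F$ in Condition~\ref{Fcond}(3), for every $\xi$ in the dual of $L_{2}(Z,X)$ one has
\begin{equation*}
\langle\langle G(u_{t})-G(u_{s}),\xi\rangle\rangle=\int_{0}^{1}\langle\langle DG(\theta_{u}(r))(u_{t}-u_{s}),\xi\rangle\rangle\,dr,
\end{equation*}
and likewise with $u$ replaced by $v$. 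Subtracting and rewriting the integrand as
\begin{equation*}
DG(\theta_{u}(r))\big[(u-v)_{t}-(u-v)_{s}\big]+\big[DG(\theta_{u}(r))-DG(\theta_{v}(r))\big](v_{t}-v_{s})
\end{equation*}
splits the estimate into two pieces: the first is bounded in $L_{2}(Z,X)$ by $\big(\sup_{x}\|DA^{\epsilon}F(x)\|\big)\,\|u-v\|_{\mathbb{H}_{T}}(t-s)^{\alpha}$, while the second, using the Lipschitz bound $\|DG(\theta_{u}(r))-DG(\theta_{v}(r))\|\le C\|\theta_{u}(r)-\theta_{v}(r)\|$ together with $\theta_{u}(r)-\theta_{v}(r)=(1-r)(u-v)_{s}+r(u-v)_{t}$, is bounded by a constant multiple of $\|u-v\|_{\mathbb{H}_{T}}\|v\|_{\mathbb{H}_{T}}(t-s)^{\alpha}$. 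Taking $\sup_{\|\xi\|\le1}$, integrating over $r$, and collecting the absolute constants (the precise factor $\tfrac{3}{2}$ being a matter of bookkeeping) yields the claimed bound.

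For the ``in particular'' part I would use $P_{t-s}=P_{t-t'}P_{t'-s}$ to write, for $0\le s<t'\le t$,
\begin{equation*}
\varphi(t')-\varphi(s)=P_{t-t'}\Big(\big\{[G(u_{t'})-G(v_{t'})]-[G(u_{s})-G(v_{s})]\big\}+(I-P_{t'-s})[G(u_{s})-G(v_{s})]\Big).
\end{equation*}
Since $\|P_{t-t'}\|\le1$, the first brace is controlled directly by the first part of the lemma, so it remains only to estimate $(I-P_{t'-s})[G(u_{s})-G(v_{s})]$. Here I would invoke Condition~\ref{Fcond}(2): $A^{\delta}[G(u_{s})-G(v_{s})]=A^{\delta+\epsilon}[F(u_{s})-F(v_{s})]$ has $L_{2}(Z,X)$-norm at most $C\|A^{\delta}(u-v)_{s}\|\le C\|u-v\|_{\mathbb{H}_{T}}$, so $G(u_{s})-G(v_{s})\in D(A^{\delta})$ and the regularity estimate (\ref{regularity}) gives $\|(I-P_{t'-s})[G(u_{s})-G(v_{s})]\|_{L_{2}(Z,X)}\le C(t'-s)^{\delta}\|u-v\|_{\mathbb{H}_{T}}$. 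Combining the two contributions, and using $\delta\le\alpha$, gives the asserted Hölder bound for $\varphi$ on $[0,t]$.

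The mean value computations and the arithmetic of constants are routine once the decompositions above are in hand. The one genuinely delicate point is the treatment of the correction term $(I-P_{t'-s})[G(u_{s})-G(v_{s})]$: it is precisely here that Condition~\ref{Fcond}(2), the ``gain of $\delta$ derivatives'' for differences of $F$, is indispensable, since without it differences of $G$ are only controlled at the level of $X$ and one obtains no time regularity for $\varphi$ at all; and it is here that the relation $\delta\le\alpha$ must be used to reconcile the $(t'-s)^{\delta}$ produced by this term with the $(t'-s)^{\alpha}$ coming from the first part. I expect this step to be the main obstacle; everything else is a direct adaptation of the argument already used for Lemma~\ref{lip-0}.
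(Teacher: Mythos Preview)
Your argument for the first inequality is essentially the paper's: both interpolate along the segments $\theta_u(r)$, $\theta_v(r)$, split the integrand into the two pieces you describe, and then invoke the uniform bound and the Lipschitz continuity of $DA^{\epsilon}F$ from Condition~\ref{Fcond}(3). This part is correct, and the bookkeeping leading to the factor $\tfrac{3}{2}$ agrees with the paper.

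For the ``in particular'' clause the paper gives no separate argument; it simply writes ``and the result follows'' after the first estimate. You are right to notice that the semigroup factor $P_{t-\cdot}$ forces an extra correction $(I-P_{t'-s})[G(u_s)-G(v_s)]$, and your decomposition of $\varphi(t')-\varphi(s)$ is the natural one. The final step, however, is in error: from $\delta\le\alpha$ you cannot pass from a $(t'-s)^{\delta}$ bound to an $\alpha$-H\"older estimate, since for small increments $(t'-s)^{\delta}\ge (t'-s)^{\alpha}$, and $\delta$-H\"older continuity is strictly \emph{weaker} than $\alpha$-H\"older continuity. Using Condition~\ref{Fcond}(2) as you do, the correction term yields at best $(t'-s)^{\delta}$, so the $\alpha$-H\"older conclusion does not follow from your argument as written. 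One can repair this for the exponents $\epsilon\le\alpha$ that are actually needed downstream (cf.\ the proof of Theorem~\ref{contract}) by applying (\ref{regularity}) with exponent $\alpha$ rather than $\delta$, bounding $A^{\alpha+\epsilon}[F(u_s)-F(v_s)]$ via Condition~\ref{Fcond}(1) (this is legitimate because $\alpha+\epsilon\le 2\alpha=\varepsilon$); but that is not the route you described, and it adds a further additive constant, so the displayed H\"older bound in the lemma remains somewhat informal either way.
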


\begin{remark}
Here as elsewhere we denote the $\alpha -$H\"{o}lder norm of a path $u:\left[
s,t\right] \rightarrow W$ in a Banach space $\left( W,\left\vert \left\vert
\cdot \right\vert \right\vert _{W}\right) $ by 
\begin{equation*}
\left\vert \left\vert u\right\vert \right\vert _{\alpha -\text{H\"{o}l;}%
\left[ s,t\right] }=\sup_{\substack{ t_{1}<t_{2},  \\ t_{1},t_{2}\in \left[
s,t\right] }}\frac{\left\vert \left\vert u\left( t_{2}\right) -u\left(
t_{1}\right) \right\vert \right\vert _{W}}{\left( t_{2}-t_{1}\right)
^{\alpha }}.
\end{equation*}
\end{remark}

\begin{proof}
Let $\theta _{\tau }^{u}=\tau u_{t}+\left( 1-\tau \right) u_{s},$ $\theta
_{\tau }^{v}=\tau v_{t}+\left( 1-\tau \right) v_{s}$ and $\xi \in
L_{2}\left( Z,X\right) ^{\prime }$ then we have 
\begin{eqnarray*}
&&\left\langle \left\langle \left[ A^{\epsilon }F\left( u_{t}\right)
-A^{\epsilon }F\left( u_{s}\right) \right] -\left[ A^{\epsilon }F\left(
v_{t}\right) -A^{\epsilon }F\left( v_{s}\right) \right] ,\xi \right\rangle
\right\rangle \\
&=&\int_{0}^{1}\frac{d}{d\tau }\left\langle \left\langle A^{\epsilon
}F\left( \theta _{\tau }^{u}\right) ,\xi \right\rangle \right\rangle d\tau
-\int_{0}^{1}\frac{d}{d\tau }\left\langle \left\langle A^{\epsilon }F\left(
\theta _{\tau }^{v}\right) ,\xi \right\rangle \right\rangle d\tau \\
&=&\int_{0}^{1}\left\langle \left\langle DA^{\epsilon }F\left( \theta _{\tau
}^{u}\right) \left( \frac{d}{d\tau }\left[ \theta _{\tau }^{u}-\theta _{\tau
}^{v}\right] \right) ,\xi \right\rangle \right\rangle d\tau \\
&&+\int_{0}^{1}\left\langle \left\langle \left[ DA^{\epsilon }F\left( \theta
_{\tau }^{u}\right) -DA^{\epsilon }F\left( \theta _{\tau }^{v}\right) \right]
\frac{d}{d\tau }\theta _{\tau }^{v},\xi \right\rangle \right\rangle d\tau .
\end{eqnarray*}

\noindent This implies 
\begin{eqnarray*}
&&\left\vert \left\langle \left\langle \left[ A^{\epsilon }F\left(
u_{t}\right) -A^{\epsilon }F\left( u_{s}\right) \right] -\left[ A^{\epsilon
}F\left( v_{t}\right) -A^{\epsilon }F\left( v_{s}\right) \right] ,\xi
\right\rangle \right\rangle \right\vert \\
&\leq &\int_{0}^{1}\left\vert \left\vert DA^{\epsilon }F\left( \theta _{\tau
}^{u}\right) \left( \frac{d}{d\tau }\left[ \theta _{\tau }^{u}-\theta _{\tau
}^{v}\right] \right) \right\vert \right\vert \left\vert \left\vert \xi
\right\vert \right\vert d\tau \\
&&+\int_{0}^{1}\left\vert \left\vert \left[ DA^{\epsilon }F\left( \theta
_{\tau }^{u}\right) -DA^{\epsilon }F\left( \theta _{\tau }^{v}\right) \right]
\frac{d}{d\tau }\theta _{\tau }^{v}\right\vert \right\vert \left\vert
\left\vert \xi \right\vert \right\vert d\tau \\
&\leq &\int_{0}^{1}\sup_{x\in X}\left\vert \left\vert DA^{\epsilon }F\left(
x\right) \right\vert \right\vert \left\vert \left\vert \left[ u_{t}-u_{s}%
\right] -\left[ v_{t}-v_{s}\right] \right\vert \right\vert \left\vert
\left\vert \xi \right\vert \right\vert d\tau \\
&&+\int_{0}^{1}\left\vert \left\vert \theta _{\tau }^{u}-\theta _{\tau
}^{v}\right\vert \right\vert \left\vert \left\vert v_{t}-v_{s}\right\vert
\right\vert \left\vert \left\vert \xi \right\vert \right\vert d\tau \\
&=&\int_{0}^{1}\sup_{x\in X}\left\vert \left\vert DA^{\epsilon }F\left(
x\right) \right\vert \right\vert \left\vert \left\vert \left[ u_{t}-u_{s}%
\right] -\left[ v_{t}-v_{s}\right] \right\vert \right\vert \left\vert
\left\vert \xi \right\vert \right\vert d\tau \\
&&+\int_{0}^{1}\left[ \tau \left\vert \left\vert \left( u_{t}-u_{s}\right)
-\left( v_{t}-v_{s}\right) \right\vert \right\vert +\left\vert \left\vert
u_{s}-v_{s}\right\vert \right\vert \right] \left\vert \left\vert
v_{t}-v_{s}\right\vert \right\vert \left\vert \left\vert \xi \right\vert
\right\vert d\tau \\
&\leq &\sup_{x\in X}\left\vert |DA^{\epsilon }F\left( x\right) \right\vert
\left\vert \left\vert u-v\right\vert \right\vert _{\mathbb{H}_{T}}\left(
t-s\right) ^{\alpha }\left\vert \left\vert \xi \right\vert \right\vert \\
&&+\int_{0}^{1}\left[ \tau \left\vert \left\vert \left( u_{t}-u_{s}\right)
-\left( v_{t}-v_{s}\right) \right\vert \right\vert +\left\vert \left\vert
u_{s}-v_{s}\right\vert \right\vert \right] \left\vert \left\vert
v\right\vert \right\vert _{\mathbb{H}_{T}}\left( t-s\right) ^{\alpha
}\left\vert \left\vert \xi \right\vert \right\vert d\tau \\
&\leq &\left( \sup_{x\in X}\left\vert |DA^{\epsilon }F\left( x\right)
\right\vert +\frac{3}{2}\left\vert \left\vert v\right\vert \right\vert _{%
\mathbb{H}_{T}}\right) \left\vert \left\vert u-v\right\vert \right\vert _{%
\mathbb{H}_{T}}\left( t-s\right) ^{\alpha }\left\vert \left\vert \xi
\right\vert \right\vert
\end{eqnarray*}

and the result follows.
\end{proof}

\section{\protect\bigskip The Bochner Integral}

\begin{proposition}
Let $u\in \mathbb{H}_{T}$ such that $\sup_{t\in (0,T]}||A^{\delta
}u_{t}||<\infty $. Let $\varepsilon \in \lbrack 0,1)$. Then

(1) For any $t\in (0,T]$, $s\rightarrow A^{\varepsilon }P_{t-s}Q(u(s))$ is
continuous on $(0,t)$.

(2) For every $s<t\in (0,T]$, $\int_{s}^{t}A^{\varepsilon }P_{t-r}Q(u(r))dr$
exists and 
\begin{equation}
\left\Vert \int_{s}^{t}A^{\varepsilon }P_{t-r}Q(u(r))dr\right\Vert \leq
CK_{2}\left( \sup_{t\in (0,T]}||A^{\delta }u_{t}|\right) \left( t-s\right)
^{1-\epsilon }\text{.}  \label{fe1}
\end{equation}
\end{proposition}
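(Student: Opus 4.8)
The plan is to establish the two claims separately; throughout I would write $M:=\sup_{t\in(0,T]}\|A^{\delta}u_{t}\|<\infty$, and record that, by part (2) of Condition \ref{Qcond} and the monotonicity of $K_{2}$, one has $\|Q(u_{r})\|\le K_{2}(\|A^{\delta}u_{r}\|)\le K_{2}(M)$ for every $r\in(0,T]$. The two tools that do all the work are the smoothing estimate $\|A^{\varepsilon}P_{r}\|\le C r^{-\varepsilon}$ (valid for $\varepsilon\le1$, with $P_{r}X\subset D(A^{\varepsilon})$) and the local Lipschitz bound of Lemma \ref{lip-0}.

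For claim (1), fix $t\in(0,T]$ and $s_{0}\in(0,t)$, and for $s\in(0,t)$ split
\[
A^{\varepsilon}P_{t-s}Q(u_{s})-A^{\varepsilon}P_{t-s_{0}}Q(u_{s_{0}})=A^{\varepsilon}P_{t-s}\bigl[Q(u_{s})-Q(u_{s_{0}})\bigr]+\bigl(A^{\varepsilon}P_{t-s}-A^{\varepsilon}P_{t-s_{0}}\bigr)Q(u_{s_{0}}).
\]
The second term tends to $0$ as $s\to s_{0}$ because $Q(u_{s_{0}})$ is a fixed vector of $X$, the map $r\mapsto A^{\varepsilon}P_{r}z$ is continuous on $(0,\infty)$ for each $z\in X$, and $t-s\to t-s_{0}>0$. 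For the first term I would take $s$ in a neighbourhood of $s_{0}$ bounded away from $t$, so that $\|A^{\varepsilon}P_{t-s}\|\le C(t-s)^{-\varepsilon}$ stays uniformly bounded there; then Lemma \ref{lip-0} gives $\|Q(u_{s})-Q(u_{s_{0}})\|\le K_{1}(2M)\|A^{\delta}(u_{s}-u_{s_{0}})\|$, which vanishes as $s\to s_{0}$ by continuity of $r\mapsto A^{\delta}u_{r}$. Hence $g\colon s\mapsto A^{\varepsilon}P_{t-s}Q(u_{s})$ is continuous on $(0,t)$.

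For claim (2), fix $0\le s<t\le T$. By claim (1) the map $g$ is continuous, hence strongly measurable, on $(s,t)$, so it remains only to bound $\int_{s}^{t}\|g(r)\|\,dr$. Using $\|A^{\varepsilon}P_{t-r}\|\le C(t-r)^{-\varepsilon}$ and $\|Q(u_{r})\|\le K_{2}(M)$,
\[
\int_{s}^{t}\bigl\|A^{\varepsilon}P_{t-r}Q(u_{r})\bigr\|\,dr\le C K_{2}(M)\int_{s}^{t}(t-r)^{-\varepsilon}\,dr=\frac{C}{1-\varepsilon}K_{2}(M)\,(t-s)^{1-\varepsilon}<\infty,
\]
the finiteness coming precisely from $\varepsilon<1$. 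This yields existence of the Bochner integral, and the triangle inequality for Bochner integrals then gives \eqref{fe1} with the constant absorbing $C/(1-\varepsilon)$.

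I don't expect a serious obstacle here: the only delicate point is the apparent singularity of the integrand as $r\uparrow t$, which is integrable exactly because $\varepsilon<1$ together with the smoothing bound $\|A^{\varepsilon}P_{r}\|\lesssim r^{-\varepsilon}$; and in claim (1) one uses continuity (not merely boundedness) of $r\mapsto A^{\delta}u_{r}$, so if only boundedness were available I would instead verify strong measurability of $g$ directly via weak continuity of $r\mapsto A^{\delta}u_{r}$ and Pettis's theorem, which still suffices for claim (2).
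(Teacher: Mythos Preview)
Your proof is correct and follows essentially the same approach as the paper: the same two-term splitting (difference in $Q$ handled via Lemma~\ref{lip-0}, difference in the semigroup handled via strong continuity), and the same pointwise bound $\|A^{\varepsilon}P_{t-r}Q(u_{r})\|\le C(t-r)^{-\varepsilon}K_{2}(M)$ integrated over $(s,t)$ for part~(2). Your closing remark about needing continuity (not just boundedness) of $r\mapsto A^{\delta}u_{r}$ is in fact more careful than the paper, which simply asserts $A^{\delta}u\in C((0,T],X)$ without comment.
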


\begin{proof}
Let $t\in (0,T]$ and consider $f(s)=A^{\varepsilon }P_{t-s}Q(u(s))$. Then
for $s_{i}\in (0,t)$, $s_{1}>s_{2}$, one has 
\begin{eqnarray*}
||f(s_{1})-f(s_{2})|| &\leq &||A^{\varepsilon }P_{t-s_{1}}\left(
Qu(s_{1})-Qu(s_{2})\right) || \\
&&+||A^{\varepsilon }P_{t-s_{1}}\left( I-P_{s_{1}-s_{2}}\right) Qu(s_{2})||
\\
&\leq &C(t-s_{1})^{-\varepsilon }||Qu(s_{1})-Qu(s_{2})|| \\
&&+C(t-s_{1})^{-\varepsilon }||\left( I-P_{s_{1}-s_{2}}\right) Qu(s_{2})|| \\
&\leq &C(t-s_{1})^{-\varepsilon }K_{1}\left( 2\sup_{t\in (0,T]}||A^{\delta
}u_{t}|\right) ||A^{\delta }\left( u(s_{1})-u(s_{2})\right) || \\
&&+C(t-s_{1})^{-\varepsilon }||\left( I-P_{s_{1}-s_{2}}\right) Qu(s_{2})||
\end{eqnarray*}%
Since $A^{\delta }u\in C((0,T],X)$ so is $u$ and letting $s_{1}\rightarrow
s_{2}\in (0,t)$ the continuity of the semigroup and of $u$ give $%
\lim_{s_{1}\rightarrow s_{2}}||f(s_{1})-f(s_{2})||=0$, i.e. $f$ is
continuous on $(0,t)$. Moreover, 
\begin{eqnarray*}
||f(s)|| &\leq &C(t-s)^{-\varepsilon }||Qu(s)|| \\
&\leq &C(t-s)^{-\varepsilon }K_{2}(||A^{\delta }u(s)||) \\
&\leq &CK_{2}\left( \sup_{t\in (0,T]}||A^{\delta }u_{t}|\right)
(t-s)^{-\epsilon }\text{,}
\end{eqnarray*}

\noindent \noindent from which (\ref{fe1}) follows immediately.
\end{proof}

For $u\in \mathbb{H}_{T}$ we define 
\begin{equation}
\mathbb{L}u(t)=P_{t}u_{0}-\int_{0}^{t}A^{\tau
}P_{t-s}Q(u(s))ds+\int_{0}^{t}P_{t-s}F(u(s))dw_{s}  \label{mi-2}
\end{equation}%
where $w=(w_{t})$ is an $\alpha $-H\"{o}lder continuous path in $Z$, where $%
\alpha \in (\frac{1}{2},1]$ such that $\alpha +\tau <1$, so that $2\alpha >1$%
. With these constraints, and recalling that $\delta \leq \alpha $ we see
that from the previous proposition that 
\begin{equation*}
\sup_{t\in \lbrack 0,T]}||A^{\delta }\int_{0}^{t}A^{\tau
}P_{t-s}Q(u(s))ds||\leq CK_{2}(\beta )T^{1-\delta -\tau }
\end{equation*}%
and%
\begin{eqnarray*}
&&||\int_{0}^{t_{2}}A^{\tau }P_{t_{2}-s}Q(u(s))ds-\int_{0}^{t_{1}}A^{\tau
}P_{t_{1}-s}Q(u(s))ds|| \\
&=&\left\Vert \int_{0}^{t_{1}}\left( P_{t_{2}-t_{1}}-I\right) A^{\tau
}P_{t_{1}-s}Q\left( u\left( s\right) \right) ds+\int_{t_{1}}^{t_{2}}A^{\tau
}P_{t_{2}-s}Q\left( u\left( s\right) \right) ds\right\Vert \\
&\leq &C_{1}\left( t_{2}-t_{1}\right) ^{\alpha }K_{2}\left( \sup_{t\in \left[
0,T\right] }A^{\delta }u_{t}\right) t_{1}^{1-\left( \alpha +\tau \right)
}+C_{2}K_{2}\left( \sup_{t\in \left[ 0,T\right] }A^{\delta }u_{t}\right)
\left( t_{2}-t_{1}\right) ^{1-\tau } \\
&\leq &CT^{1-\left( \alpha +\tau \right) }K_{2}(\beta )\left(
t_{2}-t_{1}\right) ^{\alpha },
\end{eqnarray*}%
so that the Bochner integral appearing in (\ref{mi-2}) has the right
properties to be in $\mathbb{H}_{T}.$ We now turn our attention to the the
properties of the Young integral featuring in (\ref{mi-2}).

\section{The Young Integral}

We will use the Young integral to make a rigorous path-wise definition of
the stochastic integral above provided the process has sample paths of the
correct regularity. The example to keep in mind is fractional Brownian
motion. The following is due to L.C. Young from the 1930s when he made the
definition following investigations into the convergence of Fourier series
(see \cite{young}). It is an extension of Stieltjes integration to the case
of paths of finite $p$ variation for $p>1$.

\begin{definition}
A continuous path $f:\left[ 0,T\right] \rightarrow X$ \ where $X$ is a
Banach space is said to have finite $p$ variation on $\left[ 0,T\right] $ if 
\begin{equation*}
\left\Vert f\right\Vert _{p,\left[ 0,T\right] }:=\left[ \sup_{D}\sum_{k}%
\left\Vert f_{t_{k}}-f_{t_{k-1}}\right\Vert _{X}^{p}\right] ^{\frac{1}{p}%
}<\infty
\end{equation*}%
where by $\sup_{D}$ we understand supremum over all partitions of $\left[ 0,T%
\right] $.
\end{definition}

In analogy with the Riemann sums for Stieltjes integrals of paths of finite
variation, we have the following general definition of the integral in the
case of real valued functions $f$ and $w$ defined on the interval $\left[ 0,T%
\right] $.

\begin{remark}
It is obvious that if a path $f$ is H\"{o}lder continuous with exponent $%
h\in \left( 0,1\right) $ on the interval $\left[ s,t\right] $ then $f$ has
finite $\frac{1}{h}$ variation and%
\begin{equation*}
\left\Vert f\right\Vert _{\frac{1}{h},\left[ s,t\right] }\equiv \left(
\sup_{D\left[ s,t\right] }\sum_{k}\left\vert \left\vert
f_{t_{k}}-f_{t_{k-1}}\right\vert \right\vert ^{\frac{1}{h}}\right) ^{h}\leq
\left\Vert f\right\Vert _{h-H\ddot{o}l,\left[ s,t\right] }\left( t-s\right)
^{h}
\end{equation*}
We will use this fact several times in later proofs.
\end{remark}

\begin{definition}
The Stieltjes integral 
\begin{equation*}
\int_{0}^{T}f\left( t\right) dw_{t}
\end{equation*}%
is said to exist in the Riemann sense with value $I$ provided that for all $%
\delta >0$ there exists a $\varepsilon _{\delta }>0$ such that if the
partition $0=t_{0}<t_{1}<\cdots <t_{N-1}<t_{N}=T$ satisfies $\left\vert
t_{i}-t_{i-1}\right\vert <\delta $ for all $i$, then%
\begin{equation*}
\left\vert \sum_{k=0}^{N-1}f_{t_{k}}\left( w_{t_{k+1}}-w_{t_{k}}\right)
-I\right\vert <\varepsilon _{\delta }
\end{equation*}%
with $\varepsilon _{\delta }\rightarrow 0$ as $\delta \rightarrow 0$.
\end{definition}

\begin{theorem}
If the real valued functions $f$ and $w$ have finite $p$ and $q$ variation
respectively such that $p,q>0$ and $\frac{1}{p}+\frac{1}{q}>1$, then the
Stieltjes integral 
\begin{equation*}
\int_{0}^{T}f\left( t\right) dw_{t}
\end{equation*}%
exists in the Riemann sense.
\end{theorem}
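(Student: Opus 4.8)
The plan is to establish the classical Young--Lo\`{e}ve estimate for (left) Riemann sums and then pass to the limit via a Cauchy-net argument. For a partition $D=\{a=t_{0}<\cdots <t_{N}=b\}$ of an interval $[a,b]$ write $S(D)=\sum_{k=0}^{N-1}f_{t_{k}}(w_{t_{k+1}}-w_{t_{k}})$, set $\theta =\tfrac{1}{p}+\tfrac{1}{q}>1$, and put $\omega (s,t)=\left\Vert f\right\Vert _{p,[s,t]}^{p}+\left\Vert w\right\Vert _{q,[s,t]}^{q}$. Since $p$- and $q$-variation are superadditive in the interval (concatenate near-optimal partitions), $\omega $ is superadditive, and $|f_{t}-f_{s}|\leq \omega (s,t)^{1/p}$, $|w_{t}-w_{s}|\leq \omega (s,t)^{1/q}$.

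\emph{Step 1: the Young--Lo\`{e}ve estimate.} I claim that for every partition $D$ of $[a,b]$,
\begin{equation*}
\left\vert S(D)-f_{a}(w_{b}-w_{a})\right\vert \leq C_{\theta }\,\omega (a,b)^{\theta },\qquad C_{\theta }=2^{\theta }\sum_{k\geq 1}k^{-\theta }<\infty ,
\end{equation*}
the finiteness of $C_{\theta }$ being exactly the point at which $\theta >1$ is used. If $t_{j}$ is an interior point of $D$ and $D^{\prime }$ denotes $D$ with $t_{j}$ removed, a one-line computation gives $S(D)-S(D^{\prime })=(f_{t_{j}}-f_{t_{j-1}})(w_{t_{j+1}}-w_{t_{j}})$, whence $|S(D)-S(D^{\prime })|\leq \omega (t_{j-1},t_{j+1})^{1/p}\omega (t_{j-1},t_{j+1})^{1/q}=\omega (t_{j-1},t_{j+1})^{\theta }$. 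A pigeonhole argument shows that if $D$ has $N\geq 2$ subintervals there is an interior point $t_{j}$ with $\omega (t_{j-1},t_{j+1})\leq \tfrac{2}{N-1}\omega (a,b)$: if instead every such quantity exceeded $\tfrac{2}{N-1}\omega (a,b)$, then, summing over $j$ of each fixed parity separately and using that the intervals $[t_{j-1},t_{j+1}]$ of a fixed parity overlap at most in endpoints (so their $\omega $-masses sum to at most $\omega (a,b)$ by superadditivity), we would get $\sum_{j=1}^{N-1}\omega (t_{j-1},t_{j+1})\leq 2\omega (a,b)<\sum_{j=1}^{N-1}\omega (t_{j-1},t_{j+1})$, a contradiction. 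Removing such a point costs at most $(\tfrac{2}{N-1})^{\theta }\omega (a,b)^{\theta }$; iterating on the successively coarsened partitions until only the single interval $[a,b]$ remains --- at which stage $S$ reduces to $f_{a}(w_{b}-w_{a})$ --- and summing the costs $\sum_{n=2}^{N}(\tfrac{2}{n-1})^{\theta }\omega (a,b)^{\theta }$ yields the claimed bound.

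\emph{Step 2: refinement estimate and smallness.} If $D^{\prime }\supseteq D=\{t_{0}<\cdots <t_{N}\}$, decomposing $D^{\prime }$ into its restrictions to the subintervals $[t_{k-1},t_{k}]$ gives $S(D^{\prime })-S(D)=\sum_{k=1}^{N}\bigl[S(D^{\prime }|_{[t_{k-1},t_{k}]})-f_{t_{k-1}}(w_{t_{k}}-w_{t_{k-1}})\bigr]$, so by Step 1 applied on each subinterval,
\begin{equation*}
\left\vert S(D^{\prime })-S(D)\right\vert \leq C_{\theta }\sum_{k=1}^{N}\omega (t_{k-1},t_{k})^{\theta }\leq C_{\theta }\Bigl(\max_{1\leq k\leq N}\omega (t_{k-1},t_{k})\Bigr)^{\theta -1}\omega (0,T),
\end{equation*}
using $\theta -1\geq 0$ and $\sum_{k}\omega (t_{k-1},t_{k})\leq \omega (0,T)$. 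Because $f$ and $w$ are continuous of finite $p$- resp. $q$-variation, the variation functions $t\mapsto \left\Vert f\right\Vert _{p,[0,t]}^{p}$ and $t\mapsto \left\Vert w\right\Vert _{q,[0,t]}^{q}$ are continuous (a standard fact); hence $t\mapsto \omega (0,t)$ is continuous, so uniformly continuous on $[0,T]$, and since $\omega (s,t)\leq \omega (0,t)-\omega (0,s)$ by superadditivity, we get $\max_{k}\omega (t_{k-1},t_{k})\rightarrow 0$ as $\operatorname{mesh}(D)\rightarrow 0$. Thus the right-hand side above tends to $0$ with the mesh.

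\emph{Step 3: passage to the limit.} Given $\varepsilon >0$, pick $\eta >0$ so that $\left\vert S(D^{\prime })-S(D)\right\vert <\varepsilon $ whenever $\operatorname{mesh}(D)<\eta $ and $D^{\prime }\supseteq D$. For two partitions $D_{1},D_{2}$ of mesh $<\eta $, the common refinement $D_{1}\cup D_{2}$ gives $\left\vert S(D_{1})-S(D_{2})\right\vert \leq \left\vert S(D_{1})-S(D_{1}\cup D_{2})\right\vert +\left\vert S(D_{1}\cup D_{2})-S(D_{2})\right\vert <2\varepsilon $. Hence the Riemann sums form a Cauchy net along the partitions ordered by decreasing mesh, so they converge to some $I$, and $\left\vert S(D)-I\right\vert \leq 2\varepsilon $ whenever $\operatorname{mesh}(D)<\eta $; this is precisely the assertion that $\int_{0}^{T}f\,dw_{t}$ exists in the Riemann sense. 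I expect Step 1 --- the point-removal/pigeonhole estimate together with the summation of $\sum_{k}k^{-\theta }$ --- to be the crux, and the place where the hypothesis $\tfrac{1}{p}+\tfrac{1}{q}>1$ is genuinely needed; Steps 2 and 3 are routine given the continuity of $f$ and $w$.
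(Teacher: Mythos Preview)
Your argument is correct and is essentially the classical Young--Lo\`{e}ve proof: the point-removal estimate in Step~1, the refinement bound in Step~2, and the Cauchy-net conclusion in Step~3 are all sound, and you correctly identify the summability of $\sum k^{-\theta}$ as the place where $\tfrac{1}{p}+\tfrac{1}{q}>1$ enters. The only cosmetic wrinkle is the phrasing of the contradiction in the pigeonhole step (you want $2\omega(a,b)\geq \sum_{j}\omega(t_{j-1},t_{j+1})>(N-1)\cdot\tfrac{2}{N-1}\omega(a,b)=2\omega(a,b)$), and you tacitly use that finite $p$-variation paths are continuous in the paper's convention (Definition preceding the theorem), which is needed for $\max_{k}\omega(t_{k-1},t_{k})\to 0$.

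As for comparison: the paper does not give a proof of this theorem at all. It is stated as a classical result due to Young \cite{young}, and for the Banach-space extension and Young's inequality the reader is referred to \cite{caruana}. So there is no ``paper's own proof'' to compare against; you have supplied precisely the argument that those references contain.
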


In fact the definition of the integral and Young's theorem can be extended
to the infinite dimensional case when $w:\left[ 0,T\right] \rightarrow Z$
and $f:\left[ 0,T\right] \rightarrow L\left( Z,X\right) $ are bounded paths
with finite $p$ and $q$ variation respectively. In this case, the integral $%
\int_{0}^{t}f\left( s\right) dw_{s}:\left[ 0,T\right] \rightarrow X$ is a
bounded path of finite $q$ variation and we have the estimate 
\begin{equation*}
\left\Vert \int_{0}^{\cdot }\left( f\left( s\right) -f\left( 0\right)
\right) dw_{s}\right\Vert _{p,\left[ 0,T\right] }\leq C\left\Vert
f\right\Vert _{q,\left[ 0,T\right] }\left\Vert w\right\Vert _{p,\left[ 0,T%
\right] }
\end{equation*}%
known as Young's inequality. The details can be found in \cite{caruana}.
Using this, we can in a path-wise way define the integral $%
\int_{0}^{t}A^{\varepsilon }P_{t-s}F\left( u\left( s\right) \right) dw_{s}$
for processes $w$ with sufficiently regular sample paths.

\begin{lemma}
\bigskip Let $u\in \mathbb{H}_{T},$ $w:\left[ 0,T\right] \rightarrow Z$ be $%
\alpha -$H\"{o}lder continuous, $\epsilon \in \left[ 0,\alpha \right] $ and $%
\delta +\alpha >1$. If $F:X\rightarrow L_{2}\left( Z,X\right) $ satisfies
conditions (\ref{eq:cond1}) and (\ref{eq:cond2}) \noindent then 
\begin{equation}
\left\vert \left\vert A^{\epsilon }P_{t-s}F\left( u_{s}\right) -A^{\epsilon
}P_{t-r}F\left( u_{r}\right) \right\vert \right\vert _{L_{2}\left(
Z,X\right) }\leq C\left( \sup_{r\in (0,T]}\left\vert \left\vert A^{\delta
}u_{r}\right\vert \right\vert +T^{\alpha -\delta }+1\right) \left\vert
s-r\right\vert ^{\delta }  \label{holder bound}
\end{equation}

\noindent \noindent for all $r,s\in \left[ 0,t\right] ,t\in \left[ 0,T\right]
$. In particular, $A^{\epsilon }P_{t-\cdot }F\left( u_{\cdot }\right) :\left[
0,t\right] \rightarrow L_{2}\left( Z,X\right) $ is $\delta $-H\"{o}lder
continuous and hence the Young integral 
\begin{equation*}
\int_{0}^{t}A^{\epsilon }P_{t-s}F\left( u_{s}\right) dw_{s}
\end{equation*}

\noindent exists and satisfies 
\begin{equation}
\left\vert \left\vert \int_{0}^{t}A^{\epsilon }P_{t-s}F\left( u_{s}\right)
dw_{s}-A^{\epsilon }P_{t}F\left( u_{0}\right) \left( w_{t}-w_{0}\right)
\right\vert \right\vert _{X}\leq C\left( \sup_{r\in (0,T]}\left\vert
\left\vert A^{\delta }u_{r}\right\vert \right\vert +T^{\alpha -\delta
}+1\right) .  \label{young's inequality}
\end{equation}
\end{lemma}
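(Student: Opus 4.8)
The plan is to establish the Hölder bound \eqref{holder bound} by the standard ``decompose into a semigroup-difference piece and a drift piece'' argument, and then feed this directly into Young's inequality. First I would fix $r<s$ in $[0,t]$ and write
\[
A^{\epsilon}P_{t-r}F(u_{r})-A^{\epsilon}P_{t-s}F(u_{s})
=\bigl(P_{t-r}-P_{t-s}\bigr)A^{\epsilon}F(u_{r})
+P_{t-s}A^{\epsilon}\bigl[F(u_{r})-F(u_{s})\bigr].
\]
For the second term I would use the contraction property $\|P_{t-s}\|\le 1$ together with the consequence \eqref{consequence1} of Condition \ref{Fcond} (which gives $\|A^{\epsilon}[F(u_{r})-F(u_{s})]\xi\|\le C\|u_{r}-u_{s}\|\|\xi\|$) and then the Hölder regularity of $u\in\mathbb{H}_{T}$, namely $\|u_{r}-u_{s}\|\le\|u\|_{\mathbb{H}_{T}}|s-r|^{\alpha}$; since $\delta\le\alpha$ and $|s-r|\le T$, this is bounded by $C\|u\|_{\mathbb{H}_{T}}T^{\alpha-\delta}|s-r|^{\delta}$.

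For the first term I would invoke the identity $(I-P_{s-r})=\int_{0}^{s-r}AP_{\sigma}\,d\sigma$ exactly as in the derivation of \eqref{regularity}: writing $P_{t-r}-P_{t-s}=(I-P_{s-r})P_{t-s}$ and then $A\,P_{\sigma}A^{\epsilon}F(u_{r})=A^{1-\delta}P_{\sigma}\,A^{\delta+\epsilon}F(u_{r})$, I get
\[
\bigl\|\bigl(P_{t-r}-P_{t-s}\bigr)A^{\epsilon}F(u_{r})\bigr\|_{L_{2}(Z,X)}
\le \int_{0}^{s-r}\|A^{1-\delta}P_{\sigma}\|\,\|A^{\delta+\epsilon}F(u_{r})\|_{L_{2}(Z,X)}\,d\sigma,
\]
and $\|A^{1-\delta}P_{\sigma}\|\le C\sigma^{-(1-\delta)}$ is integrable on $[0,s-r]$ (here $\delta<1$ is crucial), yielding a factor $C|s-r|^{\delta}$. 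The remaining norm $\|A^{\delta+\epsilon}F(u_{r})\|_{L_{2}(Z,X)}$ is controlled by the consequence \eqref{consequence2} of Condition \ref{Fcond}, which bounds it by $C_{1}\|A^{\delta}u_{r}\|+C_{1}\le C_{1}\sup_{\rho\in(0,T]}\|A^{\delta}u_{\rho}\|+C_{1}$. Combining the two pieces gives precisely \eqref{holder bound}, with the constant absorbing $\|u\|_{\mathbb{H}_{T}}$ into the stated $\sup\|A^{\delta}u_{r}\|+T^{\alpha-\delta}+1$ (using $\|u\|_{\mathbb{H}_{T}}$ itself controlled by $\sup\|A^{\delta}u_{\rho}\|$ plus its Hölder seminorm, and that the seminorm contributes the $T^{\alpha-\delta}$ factor).

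Given \eqref{holder bound}, the path $s\mapsto A^{\epsilon}P_{t-s}F(u_{s})$ is $\delta$-Hölder on $[0,t]$, hence of finite $1/\delta$-variation; since $w$ is $\alpha$-Hölder, of finite $1/\alpha$-variation, and $\tfrac{1}{\delta}+\tfrac{1}{\alpha}>1$ by the hypothesis $\alpha+\delta>1$ (via $\delta<1$ so $1/\delta>1$, or more precisely $\delta+\alpha>1\Rightarrow \delta>1-\alpha\Rightarrow 1/\delta < 1/(1-\alpha)$, and $1/\delta+1/\alpha>1$ follows since both exceed $1$), Young's theorem (the infinite-dimensional version with the $L_{2}(Z,X)$-on-$Z$ pairing cited from \cite{caruana}) guarantees the Young integral exists. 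For the estimate \eqref{young's inequality} I would apply Young's inequality to the path $g(s)=A^{\epsilon}P_{t-s}F(u_{s})$, namely
\[
\Bigl\|\int_{0}^{t}\bigl(g(s)-g(0)\bigr)dw_{s}\Bigr\|_{X}\le C\,\|g\|_{1/\delta,[0,t]}\,\|w\|_{1/\delta,[0,t]},
\]
converting Hölder seminorms to variation norms via the Remark (picking up harmless powers of $T$), then adding back the $g(0)(w_{t}-w_{0})=A^{\epsilon}P_{t}F(u_{0})(w_{t}-w_{0})$ term, and absorbing the $\|w\|_{\alpha\text{-H\"ol}}$ and powers of $T$ into the constant $C$.

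I expect the main obstacle to be bookkeeping rather than any deep difficulty: one must be careful that the relatively-Lipschitz condition \eqref{eq:cond2} is genuinely needed (to control $A^{\delta+\epsilon}F$, not merely $A^{\epsilon}F$) when estimating the semigroup-difference term, and one must check that $\epsilon\le\alpha$ together with $\varepsilon=\max\{\alpha+\delta,2\alpha\}$ in Condition \ref{Fcond} ensures both $A^{\epsilon}F$ and $A^{\delta+\epsilon}F$ fall within the range where \eqref{consequence1}–\eqref{consequence2} apply. The only genuinely delicate point is the exponent matching $\delta<1$ making $\|A^{1-\delta}P_{\sigma}\|$ integrable and $\delta+\alpha>1$ making the Young pairing admissible — these are exactly why the parameter constraints in Condition \ref{Qcond} were imposed.
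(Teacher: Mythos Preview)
Your proposal is correct and essentially identical to the paper's argument: split the difference into the semigroup increment $(I-P_{s-r})A^{\epsilon}F(u_{r})$ (handled via the regularity estimate \eqref{regularity} and \eqref{consequence2}) and the drift piece $P_{t-s}A^{\epsilon}[F(u_{r})-F(u_{s})]$ (handled via \eqref{eq:cond1} and the $\alpha$-H\"older bound on $u$, giving the $T^{\alpha-\delta}(s-r)^{\delta}$ contribution), then invoke Young's theorem and inequality. The only slip to clean up is your parenthetical on the Young condition: with $p=1/\delta$ and $q=1/\alpha$ the requirement $1/p+1/q>1$ is exactly the hypothesis $\delta+\alpha>1$, not ``$1/\delta+1/\alpha>1$'' as you wrote (and in your displayed Young bound $\|w\|_{1/\delta,[0,t]}$ should be $\|w\|_{1/\alpha,[0,t]}$).
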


\begin{proof}
For $0\leq r<s\leq t$ let us observe by using (\ref{eq:cond1})\ and (\ref%
{consequence2}) 
\begin{eqnarray*}
&&\left\vert \left\vert \left( A^{\epsilon }P_{t-s}F\left( u_{s}\right)
-A^{\epsilon }P_{t-r}F\left( u_{r}\right) \right) \right\vert \right\vert
_{L_{2}\left( Z,X\right) } \\
&\leq &C\left( \left\vert \left\vert \left( I-P_{s-r}\right) A^{\epsilon
}F\left( u_{r}\right) \right\vert \right\vert _{L_{2}\left( Z,X\right)
}+\left\vert \left\vert A^{\epsilon }\left[ F\left( u_{s})-F(u_{r}\right) %
\right] \right\vert \right\vert _{L_{2}\left( Z,X\right) }\right) \\
&\leq &\frac{C\left( \delta \right) }{\delta }\left( s-r\right) ^{\delta
}\left\Vert A^{\epsilon +\delta }F\left( u_{s}\right) \right\Vert
_{L_{2}\left( Z,X\right) }+C\left\vert \left\vert u_{s}-u_{r}\right\vert
\right\vert \\
&\leq &\left( s-r\right) ^{\delta }\left( C_{1}\left\vert \left\vert
A^{\delta }u_{s}\right\vert \right\vert +C_{2}\right) +C_{3}T^{\alpha
-\delta }\left( s-r\right) ^{\delta }.
\end{eqnarray*}

\noindent \noindent From which it follows that 
\begin{equation*}
\left\vert \left\vert \left( A^{\epsilon }P_{t-s}F\left( u_{s}\right)
-A^{\epsilon }P_{t-r}F\left( u_{r}\right) \right) \right\vert \right\vert
\leq C\left( \sup_{r\in (0,T]}\left\vert \left\vert A^{\delta
}u_{r}\right\vert \right\vert +T^{\alpha -\delta }+1\right) \left(
s-r\right) ^{\delta }.
\end{equation*}

\noindent Estimate$\ $(\ref{holder bound}) follows at once. From \cite%
{caruana} this is sufficient to guarantee the existence of the Young
integral and (\ref{young's inequality}) follows from Young's inequality.
\end{proof}

\section{\protect\bigskip The non-Linear mapping $\mathbb{L}$}

We now prove that the non-linear mapping $\mathbb{L}$ is a (non-linear)
bounded operator. More precisely:

\begin{theorem}
\label{invariance}If $u\in \mathbb{H}_{T}$ and $u_{0}\in D(A^{\alpha })$,
then $\mathbb{L}u\in \mathbb{H}_{T}$ and the following estimates hold: if $%
||u||_{\mathbb{H}_{T}}\leq \beta $ 
\begin{eqnarray*}
||\mathbb{L}u||_{\mathbb{H}_{T}} &\leq &C_{1}\left( 1+T^{\alpha }\right)
||A^{\alpha }u_{0}||+C_{2}\left( T^{1-\delta -\tau }+T^{1-\tau -\alpha
}\right) K_{2}(\beta ) \\
&&+C_{3}\left[ \beta +1\right] \left( T^{\alpha }+T^{\delta }+T^{\alpha
+\delta }+T^{2\alpha }\right) +C_{4}\left( 1+T^{\alpha }\right) \left\Vert
A^{\alpha }F\left( u_{0}\right) \right\Vert _{op}\text{.}
\end{eqnarray*}
\end{theorem}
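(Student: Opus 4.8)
The plan is to estimate each of the three terms in the definition of $\mathbb{L}u(t)$ separately, controlling both the $\sup_{t}\|A^{\delta}\cdot\|$ part and the $\alpha$-H\"older part of the $\mathbb{H}_{T}$ norm, and then combine. For the semigroup term $P_{t}u_{0}$: since $u_{0}\in D(A^{\alpha})$ and $\delta\leq\alpha$, we have $\|A^{\delta}P_{t}u_{0}\|=\|A^{\delta-\alpha}P_{t}A^{\alpha}u_{0}\|\leq C\|A^{\alpha}u_{0}\|$ (using that $A^{\delta-\alpha}$ is bounded since $\delta-\alpha\leq0$), giving the sup bound. For the H\"older part, $\|P_{t}u_{0}-P_{s}u_{0}\|=\|(I-P_{t-s})P_{s}u_{0}\|\leq C(t-s)^{\alpha}\|A^{\alpha}P_{s}u_{0}\|\leq C(t-s)^{\alpha}\|A^{\alpha}u_{0}\|$ by the regularity estimate (\ref{regularity}) with $\epsilon=\alpha$; this produces the $C_{1}(1+T^{\alpha})\|A^{\alpha}u_{0}\|$ contribution.

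Next, for the Bochner term $\int_{0}^{t}A^{\tau}P_{t-s}Q(u(s))\,ds$, I would invoke the computation already carried out in the discussion following (\ref{mi-2}): applying the Proposition with $\varepsilon=\delta+\tau<1$ gives $\sup_{t}\|A^{\delta}\int_{0}^{t}A^{\tau}P_{t-s}Q(u(s))\,ds\|\leq CK_{2}(\beta)T^{1-\delta-\tau}$, and the displayed two-time estimate gives $\alpha$-H\"older norm bounded by $CT^{1-\tau-\alpha}K_{2}(\beta)$. These yield the $C_{2}(T^{1-\delta-\tau}+T^{1-\tau-\alpha})K_{2}(\beta)$ term; the only point to double-check is that all the relevant exponents $1-\delta-\tau$, $1-\tau-\alpha$ are positive, which holds under Condition \ref{Qcond}.

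The Young integral term $\int_{0}^{t}P_{t-s}F(u(s))\,dw_{s}$ is where the work concentrates. For the $A^{\delta}$-bound I would write $A^{\delta}\int_{0}^{t}P_{t-s}F(u(s))\,dw_{s}=\int_{0}^{t}A^{\delta}P_{t-s}F(u(s))\,dw_{s}$ and apply the Young integral Lemma (\ref{young's inequality}) with $\epsilon=\delta$ (legitimate since $\delta\leq\alpha$), which controls the integral minus the "leading term" $A^{\delta}P_{t}F(u_{0})(w_{t}-w_{0})$ by $C(\sup_{r}\|A^{\delta}u_{r}\|+T^{\alpha-\delta}+1)\leq C(\beta+1)(1+T^{\alpha-\delta})$; the leading term itself is bounded by $\|A^{\delta}P_{t}F(u_{0})\|\,\|w\|_{\alpha\text{-H\"ol}}T^{\alpha}$, and $\|A^{\delta}P_{t}F(u_{0})\|\leq C\|A^{\alpha}F(u_{0})\|$ when $\delta\leq\alpha$. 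For the $\alpha$-H\"older bound on this term, I would split $\int_{0}^{t_{2}}-\int_{0}^{t_{1}}$ into $\int_{0}^{t_{1}}(P_{t_{2}-t_{1}}-I)A^{\delta}P_{t_{1}-s}F(u(s))\,dw_{s}+\int_{t_{1}}^{t_{2}}A^{\delta}P_{t_{2}-s}F(u(s))\,dw_{s}$, mirroring the Bochner-term argument; the first piece picks up a factor $(t_{2}-t_{1})^{\alpha'}$ from $(P_{t_{2}-t_{1}}-I)A^{\alpha'}$ for a suitable small $\alpha'$ applied to the regular integrand, and the second is estimated directly by Young's inequality on $[t_{1},t_{2}]$ using the $\delta$-H\"older control from (\ref{holder bound}) together with $\|w\|_{\alpha\text{-H\"ol};[t_{1},t_{2}]}(t_{2}-t_{1})^{\alpha}$. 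The bookkeeping needs $\varepsilon=\max\{\alpha+\delta,2\alpha\}$ exactly so that $A^{\delta}P_{t-s}F(u(s))$ still has a spare power of $A$ to spend on regularising the time-increment of the semigroup; tracking all resulting powers of $T$ gives the $C_{3}[\beta+1](T^{\alpha}+T^{\delta}+T^{\alpha+\delta}+T^{2\alpha})$ and $C_{4}(1+T^{\alpha})\|A^{\alpha}F(u_{0})\|_{op}$ contributions.

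The main obstacle I expect is the H\"older-in-time estimate for the Young integral term: one must simultaneously exploit the smoothing of the semigroup (which costs a negative power of $t-s$ near the diagonal but gains an extra power of $A$ when applied off the diagonal) and the Young/Love--Young inequality (which requires the integrand to be genuinely H\"older of a fixed exponent on the subinterval, uniformly in the truncation point $t$). Keeping the constants independent of $T$, and making sure the various $(I-P_{h})$ increments are estimated with the right fractional power so that $\alpha+\delta>1$ is used (and not violated), is the delicate accounting that the hypotheses in Condition \ref{Qcond} and Condition \ref{Fcond} are precisely designed to make work; once that is set up, assembling the four displayed terms is routine.
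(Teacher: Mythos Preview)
Your approach is essentially the paper's: treat the three summands separately, and for each control both the $\sup_t\|A^{\delta}\cdot\|$ part and the $\alpha$-H\"older seminorm. The semigroup and Bochner pieces match the paper exactly.

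Two small corrections on the Young-integral part. First, in your H\"older split you have an extra $A^{\delta}$: the $\mathbb{H}_T$ H\"older seminorm is taken on $U_t=\int_0^t P_{t-s}F(u_s)\,dw_s$ itself, not on $A^{\delta}U_t$, so the decomposition should read
\[
U_{t}-U_{s}=\int_{s}^{t}P_{t-r}F(u_r)\,dw_r+(P_{t-s}-I)\int_0^{s}P_{s-r}F(u_r)\,dw_r.
\]
Second, your ``suitable small $\alpha'$'' should in fact be exactly $\alpha$: applying the regularity estimate with exponent $\alpha$ to the second summand gives $(t-s)^{\alpha}\bigl\|\int_0^s A^{\alpha}P_{s-r}F(u_r)\,dw_r\bigr\|$, and it is this $A^{\alpha}$-weighted integral that must then be bounded.

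The paper organises all three needed Young estimates at once by setting, for $\epsilon\in[0,\alpha]$,
\[
R^{\epsilon}_{s,t}=\int_s^t A^{\epsilon}P_{t-r}F(u_r)\,dw_r,
\]
and proving a single bound on $\|R^{\epsilon}_{s,t}\|$ via Young's inequality together with the further splitting of the leading term as $A^{\epsilon}P_{t-s}[F(u_s)-F(u_0)](w_t-w_s)+A^{\epsilon}P_{t-s}F(u_0)(w_t-w_s)$ (the first handled by Condition~\ref{Fcond}, the second producing the $\|A^{\alpha}F(u_0)\|_{op}$ term). This bound is then specialised to $\epsilon=\delta$ for $\sup_t\|A^{\delta}U_t\|$, to $\epsilon=0$ for the first summand in $U_t-U_s$, and to $\epsilon=\alpha$ for the second. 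Citing the lemma bound (\ref{young's inequality}) directly, as you propose, does not by itself exhibit the explicit powers $T^{\alpha},T^{\delta},T^{\alpha+\delta},T^{2\alpha}$ claimed in the theorem; the refined $R^{\epsilon}_{s,t}$ estimate is what delivers them.
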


\begin{proof}
There are three terms appearing in the definition of $\mathbb{L}$, namely $%
P_{t}u_{0}$, the ordinary integral 
\begin{equation*}
J_{t}=\int_{0}^{t}A^{\tau }P_{t-s}Q(u_{s})ds
\end{equation*}%
and the Young integral 
\begin{equation*}
U_{t}\equiv \int_{0}^{t}P_{t-s}F(u_{s})dw_{s}\text{.}
\end{equation*}%
Let us estimate their $\mathbb{H}_{T}$ norms one by one. Firstly, 
\begin{equation*}
||A^{\delta }P_{t}u_{0}||\leq ||A^{\delta }u_{0}||\leq \beta
\end{equation*}%
and 
\begin{eqnarray*}
||P_{t}u_{0}-P_{s}u_{0}|| &=&||\left( P_{t-s}-I\right) P_{s}u_{0}|| \\
&\leq &C_{\alpha }(t-s)^{\alpha }||A^{\alpha }P_{s}u_{0}|| \\
&\leq &C_{\alpha }||A^{\alpha }u_{0}||(t-s)^{\alpha }\text{.}
\end{eqnarray*}%
Secondly we consider the ordinary integral $J_{t}$. It is elementary that 
\begin{eqnarray*}
||A^{\delta }J_{t}|| &\leq &C_{1}\int_{0}^{t}(t-s)^{-(\delta +\tau
)}||Q(u_{s})||ds \\
&\leq &C_{1}\int_{0}^{t}(t-s)^{-(\delta +\tau )}K_{2}(||A^{\delta }u_{s}||)ds
\\
&\leq &C_{2}K_{2}(\beta )t^{1-\delta -\tau }\text{.}
\end{eqnarray*}%
To estimate the H\"{o}lder norm we use the following elementary formula 
\begin{equation*}
J_{t}-J_{s}=\int_{s}^{t}A^{\tau }P_{t-r}Q(u_{r})dr+\left( P_{t-s}-I\right)
\int_{0}^{s}A^{\tau }P_{s-r}Q(u_{r})dr\text{. }
\end{equation*}%
While it is easy to see that 
\begin{eqnarray*}
\left\Vert \int_{s}^{t}A^{\tau }P_{t-r}Q(u_{r})dr\right\Vert &\leq &\frac{%
C_{4}K_{2}(\beta )}{1-\tau }\left( t-s\right) ^{1-\tau } \\
&\leq &\frac{C_{4}K_{2}(\beta )}{1-\tau }\left( t-s\right) ^{\alpha }\left(
t-s\right) ^{1-\tau -\alpha } \\
&\leq &\frac{C_{4}K_{2}(\beta )}{1-\tau }T^{1-\tau -\alpha }\left(
t-s\right) ^{\alpha }.
\end{eqnarray*}

\noindent Using the bound 
\begin{equation*}
||x-P_{t}x||\leq \frac{C(\delta )}{\delta }t^{\delta }||A^{\delta }x||
\end{equation*}%
\noindent we deduce that%
\begin{eqnarray*}
\left\vert \left\vert \left( P_{t-s}-I\right) \int_{0}^{s}A^{\tau
}P_{s-r}Q\left( u_{r}\right) dr\right\vert \right\vert &\leq &\frac{C\left(
\alpha \right) }{\alpha }\left( t-s\right) ^{\alpha }\left\vert \left\vert
A^{\alpha }\int_{0}^{s}A^{\tau }P_{s-r}Q\left( u_{r}\right) dr\right\vert
\right\vert \\
&\leq &\frac{C\left( \alpha \right) }{\alpha \left( 1-\tau -\alpha \right) }%
K_{2}\left( \beta \right) s^{1-\tau -\alpha }\left( t-s\right) ^{\alpha } \\
&\leq &\frac{C\left( \alpha \right) }{\alpha \left( 1-\tau -\alpha \right) }%
K_{2}\left( \beta \right) T^{1-\tau -\alpha }\left( t-s\right) ^{\alpha }.
\end{eqnarray*}

\noindent Finally we handle the Young integral. \ Let $[s,t]\subset \lbrack
0,T]$ then 
\begin{eqnarray*}
U_{t}-U_{s}
&=&\int_{0}^{t}P_{t-r}F(u_{r})dw_{r}-\int_{0}^{s}P_{s-r}F(u_{r})dw_{r} \\
&=&\int_{s}^{t}P_{t-r}F(u_{r})dw_{r}+(P_{t-s}-I)%
\int_{0}^{s}P_{s-r}F(u_{r})dw_{r},
\end{eqnarray*}%
so that 
\begin{eqnarray}
||U_{t}-U_{s}|| &\leq &\left\Vert
\int_{s}^{t}P_{t-r}F(u_{r})dw_{r}\right\Vert +\left\Vert
(P_{t-s}-I)\int_{0}^{s}P_{s-r}F(u_{r})dw_{r}\right\Vert  \label{difference}
\\
&\leq &\left\Vert \int_{s}^{t}P_{t-r}F(u_{r})dw_{r}\right\Vert +\frac{%
C\left( \alpha \right) }{\alpha }\left( t-s\right) ^{\alpha }\left\Vert
\int_{0}^{s}A^{\alpha }P_{s-r}F(u_{r})dw_{r}\right\Vert  \notag
\end{eqnarray}%
To handle the two integrals, let us consider 
\begin{equation*}
R_{s,t}^{\epsilon }\equiv A^{\epsilon }\int_{s}^{t}P_{t-r}F(u_{r})dw_{r}
\end{equation*}%
for $\varepsilon \in \lbrack 0,\alpha ]$. \ From Young's inequality we have
that 
\begin{eqnarray*}
\left\vert \left\vert R_{s,t}^{\epsilon }\right\vert \right\vert
&=&\left\Vert \int_{s}^{t}A^{\varepsilon }P_{t-r}F(u_{r})dw_{r}\right\Vert \\
&\leq &C\left\vert \left\vert A^{\varepsilon }P_{t-\cdot }F(u_{\cdot
})\right\vert \right\vert _{\delta -\text{H\"{o}l}}\left\vert \left\vert
w\right\vert \right\vert _{\alpha -\text{H\"{o}l}}\left( t-s\right) ^{\delta
+\alpha }+\left\vert \left\vert A^{\varepsilon }P_{t-s}F(u_{s})\left(
w_{t}-w_{s}\right) \right\vert \right\vert \\
&\leq &C\left( \beta +1+T^{\alpha -\delta }\right) \left( t-s\right)
^{\delta +\alpha }+\left\vert \left\vert A^{\varepsilon
}P_{t-s}(F(u_{s})-F\left( u_{0}\right) )\left( w_{t}-w_{s}\right)
\right\vert \right\vert \\
&&+\left\vert \left\vert A^{\varepsilon }P_{t-s}F(u_{0})\left(
w_{t}-w_{s}\right) \right\vert \right\vert
\end{eqnarray*}%
Hence, using condition \ref{Fcond} and lemma \ref{Flemma} we can deduce 
\begin{equation}
\left\vert \left\vert R_{s,t}^{\epsilon }\right\vert \right\vert \leq
C\left( \beta +1+T^{\alpha -\delta }\right) \left( t-s\right) ^{\delta
+\alpha }+C\beta s^{\alpha }\left( t-s\right) ^{\alpha }+C\left\vert
\left\vert A^{\alpha }F\left( u_{0}\right) \right\vert \right\vert
_{op}\left( t-s\right) ^{\alpha }.  \label{integral bound}
\end{equation}

An application of this with $\epsilon =\delta $ yields 
\begin{eqnarray*}
\left\Vert A^{\delta }U_{t}\right\Vert &\leq &\left\Vert
\int_{0}^{t}A^{\delta }P_{t-r}F(u_{r})dw_{r}\right\Vert \\
&\leq &C\left( \beta +1+T^{\alpha -\delta }\right) T^{\delta +\alpha
}+C\beta T^{2\alpha }+C\left\vert \left\vert A^{\alpha }F\left( u_{0}\right)
\right\vert \right\vert _{op}T^{\alpha }.
\end{eqnarray*}%
\noindent Then, two further applications of (\ref{integral bound}) with $%
\epsilon =0$ and $\epsilon =\alpha $ show%
\begin{eqnarray}
\left\Vert \int_{s}^{t}P_{t-r}F(u_{r})dw_{r}\right\Vert &\leq &C\left( \beta
+1+T^{\alpha -\delta }\right) \left( t-s\right) ^{\delta +\alpha }+C\beta
T^{\alpha }\left( t-s\right) ^{\alpha }  \label{difference2} \\
&&+C\left\vert \left\vert A^{\alpha }F\left( u_{0}\right) \right\vert
\right\vert _{op}\left( t-s\right) ^{\alpha }  \notag \\
\left\Vert \int_{0}^{s}A^{\alpha }P_{s-r}F(u_{r})dw_{r}\right\Vert &\leq
&C\left( \beta +1+T^{\alpha -\delta }\right) T^{\delta +\alpha }+C\beta
T^{2\alpha }  \notag \\
&&+C\left\vert \left\vert A^{\alpha }F\left( u_{0}\right) \right\vert
\right\vert _{op}T^{\alpha }.
\end{eqnarray}

\noindent Assembling (\ref{difference}) and (\ref{difference2}) provides%
\begin{equation*}
\frac{||U_{t}-U_{s}||}{\left( t-s\right) ^{\alpha }}\leq C\left( \beta
+1+T^{\alpha -\delta }\right) \left( T^{\delta }+T^{\alpha +\delta }\right)
+C\beta T^{\alpha }\left( 1+T^{\alpha }\right) +C\left\vert \left\vert
A^{\alpha }F\left( u_{0}\right) \right\vert \right\vert _{op}\left(
1+T^{\alpha }\right) .
\end{equation*}

\noindent The result then follows by collecting together the appropriate
terms.
\end{proof}

Next we prove that $\mathbb{L}$ is locally Lipschitz.

\begin{theorem}
\label{contract}Let $u,v\in \mathbb{H}_{T}$ such that $||u||_{\mathbb{H}%
_{T}}\leq \beta $, $||v||_{\mathbb{H}_{T}}\leq \beta $, $u_{0}=v_{0}\in
D(A^{\alpha })$, then 
\begin{eqnarray*}
&&||\mathbb{L}u-\mathbb{L}v||_{\mathbb{H}_{T}} \\
&\leq &C\left[ \left( T^{1-\delta -\tau }+T^{1-\alpha -\tau }\right)
K_{1}\left( 2\beta \right) +\left( T^{\alpha }+T^{2\alpha }\right) \left(
\sup_{x\in X}\left\vert \left\vert DA^{\alpha }F\left( x\right) \right\vert
\right\vert +\frac{3}{2}\beta +1\right) \right] ||u-v||_{\mathbb{H}_{T}}.
\end{eqnarray*}
\end{theorem}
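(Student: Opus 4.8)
The plan is to transcribe the proof of Theorem~\ref{invariance} almost line for line, the only structural simplification being that the hypothesis $u_{0}=v_{0}$ removes the deterministic part: in $\mathbb{L}u-\mathbb{L}v$ the summand $P_{t}u_{0}$ cancels, so only the $\mathbb{H}_{T}$-norm of
\[
-\int_{0}^{t}A^{\tau}P_{t-s}\bigl[Q(u_{s})-Q(v_{s})\bigr]\,ds+\int_{0}^{t}P_{t-s}\bigl[F(u_{s})-F(v_{s})\bigr]\,dw_{s}
\]
has to be estimated. I would bound the Bochner and Young pieces separately, in each case re-running the relevant computation from Theorem~\ref{invariance} with $Q(u_{s})$ replaced by $Q(u_{s})-Q(v_{s})$ and $F(u_{s})$ by $F(u_{s})-F(v_{s})$.

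For the Bochner piece, Lemma~\ref{lip-0} supplies $\|Q(u_{s})-Q(v_{s})\|\le K_{1}(\|A^{\delta}u_{s}\|+\|A^{\delta}v_{s}\|)\,\|A^{\delta}(u_{s}-v_{s})\|\le K_{1}(2\beta)\,\|u-v\|_{\mathbb{H}_{T}}$, which takes over the role that $\|Q(u_{s})\|\le K_{2}(\beta)$ played earlier. Re-running the bound on $\bigl\|A^{\delta}\int_{0}^{t}A^{\tau}P_{t-s}[\cdots]\,ds\bigr\|$ via $\|A^{\delta+\tau}P_{t-s}\|\le C(t-s)^{-(\delta+\tau)}$, and the H\"{o}lder bound via the splitting $J_{t}-J_{s}=\int_{s}^{t}A^{\tau}P_{t-r}[\cdots]\,dr+(P_{t-s}-I)\int_{0}^{s}A^{\tau}P_{s-r}[\cdots]\,dr$ together with $\|x-P_{h}x\|\le Ch^{\alpha}\|A^{\alpha}x\|$, produces the contribution $C\bigl(T^{1-\delta-\tau}+T^{1-\alpha-\tau}\bigr)K_{1}(2\beta)\,\|u-v\|_{\mathbb{H}_{T}}$, the first bracketed term.

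For the Young piece, set $D_{t}=\int_{0}^{t}P_{t-r}[F(u_{r})-F(v_{r})]\,dw_{r}$ and, exactly as with $R^{\epsilon}_{s,t}$ in Theorem~\ref{invariance}, introduce $\widetilde{R}^{\epsilon}_{s,t}=A^{\epsilon}\int_{s}^{t}P_{t-r}[F(u_{r})-F(v_{r})]\,dw_{r}$ for $\epsilon\in[0,\alpha]$. Young's inequality bounds $\|\widetilde{R}^{\epsilon}_{s,t}\|$ by $C\,\bigl\|A^{\epsilon}P_{t-\cdot}[F(u_{\cdot})-F(v_{\cdot})]\bigr\|_{\alpha\text{-H\"{o}l};[s,t]}\,\|w\|_{\alpha\text{-H\"{o}l}}\,(t-s)^{2\alpha}$ plus the boundary term $\|A^{\epsilon}P_{t-s}[F(u_{s})-F(v_{s})](w_{t}-w_{s})\|$. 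The first factor is controlled, by the Section~3 bound on the increment $[A^{\epsilon}F(u_{t})-A^{\epsilon}F(u_{s})]-[A^{\epsilon}F(v_{t})-A^{\epsilon}F(v_{s})]$ and its semigroup-smoothed form, by $C\bigl(\sup_{x}\|DA^{\epsilon}F(x)\|+\tfrac{3}{2}\beta\bigr)\|u-v\|_{\mathbb{H}_{T}}$; it is crucial that this difference is genuinely $\alpha$-H\"{o}lder, not merely $\delta$-H\"{o}lder as $F(u_{\cdot})$ alone is. For the boundary term, $\|P_{t-s}\|\le1$ and \eqref{consequence1} give $\|A^{\epsilon}P_{t-s}[F(u_{s})-F(v_{s})](w_{t}-w_{s})\|\le C\|u_{s}-v_{s}\|\,(t-s)^{\alpha}$, and since $u_{0}=v_{0}$ we may write $\|u_{s}-v_{s}\|=\|(u_{s}-v_{s})-(u_{0}-v_{0})\|\le\|u-v\|_{\mathbb{H}_{T}}s^{\alpha}$; in particular there is no residual $\|A^{\alpha}F(u_{0})\|_{op}$-type term such as the one occurring in \eqref{integral bound}. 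Assembling,
\[
\|\widetilde{R}^{\epsilon}_{s,t}\|\le C\Bigl(\sup_{x}\|DA^{\epsilon}F(x)\|+\tfrac{3}{2}\beta+1\Bigr)\|u-v\|_{\mathbb{H}_{T}}\bigl((t-s)^{2\alpha}+s^{\alpha}(t-s)^{\alpha}\bigr).
\]
Taking $\epsilon=\delta$ controls $\|A^{\delta}D_{t}\|=\|\widetilde{R}^{\delta}_{0,t}\|$; taking $\epsilon=0$ and $\epsilon=\alpha$, through the splitting $D_{t}-D_{s}=\widetilde{R}^{0}_{s,t}+(P_{t-s}-I)\int_{0}^{s}P_{s-r}[F(u_{r})-F(v_{r})]\,dw_{r}$ and $\|(P_{t-s}-I)y\|\le C(t-s)^{\alpha}\|A^{\alpha}y\|$, controls $\sup_{s\ne t}\|D_{t}-D_{s}\|/(t-s)^{\alpha}$. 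Every surviving power of $T$ is $T^{\alpha}$ or $T^{2\alpha}$, and $\sup_{x}\|DA^{\epsilon}F(x)\|\le C\sup_{x}\|DA^{\alpha}F(x)\|$ for $\epsilon\le\alpha$ (boundedness of $A^{\epsilon-\alpha}$), so the Young contribution is $C(T^{\alpha}+T^{2\alpha})\bigl(\sup_{x}\|DA^{\alpha}F(x)\|+\tfrac{3}{2}\beta+1\bigr)\|u-v\|_{\mathbb{H}_{T}}$, the second bracketed term; adding the two contributions gives the claim.

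The step I expect to be the main obstacle is ensuring that all the powers of $T$ in the Young estimate really collapse to $T^{\alpha},T^{2\alpha}$, which needs two things: the $\alpha$-H\"{o}lder (not just $\delta$-H\"{o}lder) regularity of $A^{\epsilon}[F(u_{\cdot})-F(v_{\cdot})]$, and the fact that $u_{0}=v_{0}$ kills the $F(u_{0})$ boundary term. The first of these is where Condition~\ref{Fcond} is used at full strength: expanding $A^{\epsilon}F$ along the linear interpolations $\tau u_{t}+(1-\tau)u_{s}$ and $\tau v_{t}+(1-\tau)v_{s}$, the resulting $\tau$-integral splits into a term that sees $\|(u_{t}-u_{s})-(v_{t}-v_{s})\|\le\|u-v\|_{\mathbb{H}_{T}}(t-s)^{\alpha}$ directly via the uniform bound on $DA^{\epsilon}F$, and a term that sees it via the Lipschitz bound on $DA^{\epsilon}F$ together with $\|v_{t}-v_{s}\|\le\beta(t-s)^{\alpha}$ — so the full exponent $(t-s)^{\alpha}$ is retained and no $T^{\delta},T^{\alpha+\delta}$ survive. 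Everything else is a mechanical transcription of the estimates already carried out for $\mathbb{L}$ in Theorem~\ref{invariance}.
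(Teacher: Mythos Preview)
Your proposal is correct and follows essentially the same approach as the paper's own proof: the paper also splits $\mathbb{L}u-\mathbb{L}v$ into the Bochner piece $D_{t}$ and the Young piece $\Delta_{t}$, controls the former via Lemma~\ref{lip-0} and the splitting $D_{t}-D_{s}=\int_{s}^{t}+(P_{t-s}-I)\int_{0}^{s}$, and controls the latter via the $R^{\epsilon}_{s,t}$ device together with the Section~3 increment lemma for $A^{\epsilon}[F(u_{\cdot})-F(v_{\cdot})]$ and the observation $\|u_{s}-v_{s}\|=\|(u_{s}-u_{0})-(v_{s}-v_{0})\|\le\|u-v\|_{\mathbb{H}_{T}}s^{\alpha}$. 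Your discussion of why the $\alpha$-H\"{o}lder exponent (rather than $\delta$) is retained in the Young estimate is exactly the content of that lemma and is the key point the paper relies on as well.
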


\begin{proof}
Let $u,v\in \mathbb{H}_{T}$ such that $||u||_{\mathbb{H}_{T}}\leq \beta $
and $||v||_{\mathbb{H}_{T}}\leq \beta $. Let $\theta _{t}=u_{t}-v_{t}$ and
consider 
\begin{equation*}
D_{t}=\int_{0}^{t}A^{\tau }P_{t-r}\left( Q(u_{r})-Q(v_{r})\right) dr\text{.}
\end{equation*}%
If $t>s$ then 
\begin{eqnarray*}
D_{t}-D_{s} &=&\int_{s}^{t}A^{\tau }P_{t-r}\left[ Q(u_{r})-Q(v_{r})\right] dr
\\
&&+\left( P_{t-s}-I\right) \int_{0}^{s}A^{\tau }P_{s-r}\left[
Q(u_{r})-Q(v_{r})\right] dr\text{. }
\end{eqnarray*}%
It follows that 
\begin{eqnarray*}
||D_{t}-D_{s}|| &\leq &C\int_{s}^{t}\left( t-r\right) ^{-\tau }\left\Vert
Q(u_{r})-Q(v_{r})\right\Vert dr \\
&&+C|t-s|^{\alpha }\int_{0}^{s}\left( s-r\right) ^{-\tau -\alpha }\left\Vert
Q(u_{r})-Q(v_{r})\right\Vert dr
\end{eqnarray*}%
together with the estimate 
\begin{equation}
||Q(u_{r})-Q(v_{r})||\leq K_{1}(2\beta )||A^{\delta }\theta _{r}||
\label{cr-02}
\end{equation}%
\noindent we obtain 
\begin{eqnarray*}
||D_{t}-D_{s}|| &\leq &CK_{1}(2\beta )\int_{s}^{t}\left( t-r\right) ^{-\tau
}||A^{\delta }\theta _{r}||dr \\
&&+CK_{1}(2\beta )|t-s|^{\alpha }\int_{0}^{s}\left( s-r\right) ^{-\tau
-\alpha }||A^{\delta }\theta _{r}||dr \\
&\leq &\frac{C}{1-\tau }K_{1}(2\beta )T^{1-\tau -\alpha }|t-s|^{\alpha
}||\theta ||_{\mathbb{H}_{T}} \\
&&+\frac{C}{1-\tau -\alpha }K_{1}(2\beta )T^{1-\tau -\alpha }||\theta ||_{%
\mathbb{H}_{T}}|t-s|^{\alpha }\text{.}
\end{eqnarray*}

\noindent Using the same argument we can obtain for all $t\in \left[ 0,T%
\right] $ 
\begin{equation*}
\left\vert \left\vert A^{\delta }D_{t}\right\vert \right\vert \leq \frac{%
T^{1-\delta -\tau }}{1-\delta -\tau }K_{1}\left( 2\beta \right) ||\theta ||_{%
\mathbb{H}_{T}}.
\end{equation*}

\noindent \noindent \noindent We now define 
\begin{equation*}
\Delta _{t}\equiv \int_{0}^{t}P_{t-r}\left[ F\left( u_{r}\right) -F\left(
v_{r}\right) \right] dw_{r}
\end{equation*}

\noindent \noindent and consider 
\begin{eqnarray*}
\Delta _{t}-\Delta _{s} &=&\int_{s}^{t}P_{t-r}\left[ F\left( u_{r}\right)
-F\left( v_{r}\right) \right] dw_{r} \\
&&+\left( P_{t-s}-I\right) \int_{0}^{s}P_{s-r}\left[ F\left( u_{r}\right)
-F\left( v_{r}\right) \right] dw_{r},
\end{eqnarray*}

\noindent \noindent \noindent which satisfies 
\begin{eqnarray*}
\left\vert \left\vert \Delta _{t}-\Delta _{s}\right\vert \right\vert &\leq
&\left\vert \left\vert \int_{s}^{t}P_{t-r}\left[ F\left( u_{r}\right)
-F\left( v_{r}\right) \right] dw_{r}\right\vert \right\vert \\
&&+C\left( t-s\right) ^{\alpha }\left\vert \left\vert \int_{0}^{s}A^{\alpha
}P_{s-r}\left[ F\left( u_{r}\right) -F\left( v_{r}\right) \right]
dw_{r}\right\vert \right\vert .
\end{eqnarray*}

\noindent \noindent \noindent As before, we consider for $\epsilon \in \left[
0,\alpha \right] $ 
\begin{equation*}
R_{s,t}^{\epsilon }\equiv \int_{s}^{t}A^{\varepsilon }P_{t-r}\left[ F\left(
u_{r}\right) -F\left( v_{r}\right) \right] dw_{r},
\end{equation*}

\noindent \noindent and note that Young's inequality gives 
\begin{eqnarray}
\left\vert \left\vert R_{s,t}^{\epsilon }\right\vert \right\vert &\leq
&C\left\vert \left\vert A^{\alpha }P_{t-\cdot }\left[ F\left( u_{\cdot
}\right) -F\left( v_{\cdot }\right) \right] \right\vert \right\vert _{\alpha
-\text{H\"{o}l}}\left\vert \left\vert w\right\vert \right\vert _{\alpha -%
\text{H\"{o}l}}\left( t-s\right) ^{2\alpha }  \notag \\
&&+\left\vert \left\vert A^{\alpha }P_{t-s}(F\left( u_{s}\right) -F\left(
v_{s}\right) )\left( w_{t}-w_{s}\right) \right\vert \right\vert  \notag \\
&\leq &C\left\vert \left\vert A^{\alpha }P_{t-\cdot }\left[ F\left( u_{\cdot
}\right) -F\left( v_{\cdot }\right) \right] \right\vert \right\vert _{\alpha
-\text{H\"{o}l}}\left\vert \left\vert w\right\vert \right\vert _{\alpha -%
\text{H\"{o}l}}\left( t-s\right) ^{2\alpha }  \notag \\
&&+C\left\vert \left\vert (u_{s}-u_{0})-(v_{s}-v_{0})\right\vert \right\vert
\left( t-s\right) ^{\alpha }.  \label{ineq1}
\end{eqnarray}

\noindent From lemma we have 
\begin{eqnarray}
&&\left\vert \left\vert A^{\alpha }\left[ F\left( u_{t}\right) -F\left(
v_{s}\right) \right] -A^{\alpha }\left[ F\left( u_{t}\right) -F\left(
v_{s}\right) \right] \right\vert \right\vert _{L_{2}\left( Z,X\right) }
\label{ineq2} \\
&\leq &\left( \sup_{x}\left\vert \left\vert DA^{\alpha }F\left( x\right)
\right\vert \right\vert +\frac{3}{2}\left\vert \left\vert v\right\vert
\right\vert _{\mathbb{H}_{T}}\right) ||\theta ||_{\mathbb{H}%
_{T}}(t-s)^{\alpha }  \notag
\end{eqnarray}

\noindent \noindent Combining (\ref{ineq1}) and (\ref{ineq2}) we can deduce 
\begin{equation*}
\frac{\left\vert \left\vert \Delta _{t}-\Delta _{s}\right\vert \right\vert }{%
\left( t-s\right) ^{\alpha }}\leq C\left( T^{\alpha }+T^{2\alpha }\right)
\left( \sup_{x}\left\vert \left\vert DA^{\alpha }F\left( x\right)
\right\vert \right\vert +\frac{3}{2}\beta +1\right) ||\theta ||_{\mathbb{H}%
_{T}},
\end{equation*}

\noindent and hence 
\begin{eqnarray*}
||Lu-Lv||_{\mathbb{H}_{T}} &\leq &\sup_{t\in \left[ 0,T\right] }\left\vert
\left\vert A^{\delta }D_{t}\right\vert \right\vert +\sup_{t\in \left[ 0,T%
\right] }\left\vert \left\vert A^{\delta }\Delta _{t}\right\vert \right\vert
\\
&&+\sup_{0\leq s<t\leq T}\frac{\left\vert \left\vert D_{t}-D_{s}\right\vert
\right\vert }{\left( t-s\right) ^{\alpha }}+\sup_{0\leq s<t\leq T}\frac{%
\left\vert \left\vert \Delta _{t}-\Delta _{s}\right\vert \right\vert }{%
\left( t-s\right) ^{\alpha }} \\
&\leq &C\left( T^{1-\delta -\tau }+T^{1-\alpha -\tau }\right) K_{1}\left(
2\beta \right) ||\theta ||_{\mathbb{H}_{T}} \\
&&+C\left( T^{\alpha }+T^{2\alpha }\right) \left( \sup_{x}\left\vert
\left\vert DA^{\alpha }F\left( x\right) \right\vert \right\vert +\frac{3}{2}%
\beta +1\right) ||\theta ||_{\mathbb{H}_{T}}.
\end{eqnarray*}
\end{proof}

\begin{theorem}
Let $u_{0}\in D(A^{\alpha })$ then for some $T^{\ast }>0$ depending only on $%
||A^{\alpha }u_{0}||$ and $\left\Vert A^{\alpha }F\left( u_{0}\right)
\right\Vert _{op}$ there is a unique $u\in \mathbb{H}_{T^{\ast }}$ such that 
\begin{equation*}
u_{t}=P_{t}u_{0}-\int_{0}^{t}A^{\tau
}P_{t-s}Q(u_{s})ds+\int_{0}^{t}P_{t-s}F(u_{s})dw_{s}\text{.}
\end{equation*}
\end{theorem}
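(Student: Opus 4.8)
The plan is to obtain the solution as a fixed point of the map $\mathbb{L}$ defined in (\ref{mi-2}) using the contraction mapping principle on a closed ball of $\mathbb{H}_{T^{\ast}}$ for a sufficiently small time horizon $T^{\ast}$. Theorem \ref{invariance} and Theorem \ref{contract} already supply the two crucial quantitative estimates; what remains is to exploit the fact that all the time-dependent prefactors appearing there, namely $T^{\alpha}$, $T^{\delta}$, $T^{\alpha+\delta}$, $T^{2\alpha}$, $T^{1-\delta-\tau}$ and $T^{1-\tau-\alpha}$, carry strictly positive exponents (here I use $\delta+\tau<1$, $\alpha+\tau<1$ and $\alpha,\delta>0$ from Condition \ref{Qcond}), so they all tend to $0$ as $T\downarrow 0$.

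\textbf{Step 1: choosing the ball.} Fix $u_{0}\in D(A^{\alpha})$ and set $M=\|A^{\alpha}u_{0}\|+\|A^{\alpha}F(u_{0})\|_{op}$. Pick $\beta$ larger than a fixed multiple of $M$ (say $\beta=2C_{1}(\,\|A^{\alpha}u_{0}\|+\|A^{\alpha}F(u_{0})\|_{op}\,)+1$, with $C_{1}$ the constant from Theorem \ref{invariance}). Let $B_{\beta}=\{u\in\mathbb{H}_{T}:\|u\|_{\mathbb{H}_{T}}\leq\beta,\ u_{0}\ \text{fixed}\}$, a closed (hence complete) subset of the Banach space $\mathbb{H}_{T}$. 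By Theorem \ref{invariance}, for $u\in B_{\beta}$,
\begin{equation*}
\|\mathbb{L}u\|_{\mathbb{H}_{T}}\leq C_{1}\|A^{\alpha}u_{0}\|+C_{4}\|A^{\alpha}F(u_{0})\|_{op}+g(T)
\end{equation*}
where $g(T)$ collects all the terms carrying a strictly positive power of $T$ (with coefficients depending on $\beta$, $K_{2}(\beta)$ and $\sup_{x}\|DA^{\alpha}F(x)\|$, all finite). Since $g(T)\to 0$ as $T\to 0$, there is $T_{1}>0$ with $g(T)\leq\beta/2$ for $T\leq T_{1}$; together with the choice of $\beta$ this gives $\|\mathbb{L}u\|_{\mathbb{H}_{T}}\leq\beta$, i.e. $\mathbb{L}(B_{\beta})\subseteq B_{\beta}$ for $T\leq T_{1}$.

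\textbf{Step 2: contraction.} By Theorem \ref{contract}, for $u,v\in B_{\beta}$ (which share the same initial value),
\begin{equation*}
\|\mathbb{L}u-\mathbb{L}v\|_{\mathbb{H}_{T}}\leq h(T)\,\|u-v\|_{\mathbb{H}_{T}},
\end{equation*}
where $h(T)=C[(T^{1-\delta-\tau}+T^{1-\alpha-\tau})K_{1}(2\beta)+(T^{\alpha}+T^{2\alpha})(\sup_{x}\|DA^{\alpha}F(x)\|+\tfrac{3}{2}\beta+1)]$. Every exponent here is strictly positive, so $h(T)\to 0$; choose $T_{2}>0$ with $h(T)\leq\tfrac{1}{2}$ for $T\leq T_{2}$. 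Then $\mathbb{L}$ is a $\tfrac{1}{2}$-contraction on $B_{\beta}$.

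\textbf{Step 3: conclusion.} Set $T^{\ast}=\min\{T_{1},T_{2}\}$; note $\beta$, and hence $T_{1},T_{2}$, depend only on $\|A^{\alpha}u_{0}\|$ and $\|A^{\alpha}F(u_{0})\|_{op}$ (the constants $C_{j}$, $K_{1},K_{2}$, $\sup_{x}\|DA^{\alpha}F(x)\|$ are fixed data of the problem). By the Banach fixed point theorem there is a unique $u\in B_{\beta}\subset\mathbb{H}_{T^{\ast}}$ with $\mathbb{L}u=u$, which is exactly the asserted mild solution. To upgrade to uniqueness in all of $\mathbb{H}_{T^{\ast}}$ (not merely in $B_{\beta}$): any $\mathbb{H}_{T^{\ast}}$-solution $\tilde u$ satisfies $\|\tilde u\|_{\mathbb{H}_{t}}$ is finite and, by Theorems \ref{invariance}–\ref{contract} applied on shrinking intervals $[0,t]$, lies in $B_{\beta}$ for $t$ small; a standard continuation/covering argument on $[0,T^{\ast}]$, splitting into finitely many subintervals of length $\leq\min\{T_{1},T_{2}\}$ and using the same contraction estimate on each with the endpoint value as new initial datum, forces $\tilde u=u$.

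\textbf{Main obstacle.} The genuinely delicate point is \emph{not} the fixed-point mechanics but making sure the constants controlling $T_{1},T_{2}$ really depend only on $\|A^{\alpha}u_{0}\|$ and $\|A^{\alpha}F(u_{0})\|_{op}$: one must check that $\beta$ can be chosen as a function of these two quantities alone (using $\|A^{\alpha}F(u_{0})\|_{op}\leq\|A^{\varepsilon}F(u_{0})\|_{L_{2}(Z,X)}$ which is finite by Condition \ref{Fcond}), and that the increasing functions $K_{1},K_{2}$ are then evaluated only at the fixed argument $2\beta$, keeping $g(T)$ and $h(T)$ genuine functions of $T$ with coefficients frozen. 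A secondary subtlety is verifying that $\mathbb{L}u$ indeed lies in $\mathbb{H}_{T}$ with the correct $\alpha$-Hölder-in-time and $D(A^{\delta})$-in-space regularity — but this is precisely the content of Theorem \ref{invariance} together with the Bochner and Young integral estimates of Sections 4–5, so it may be quoted directly.
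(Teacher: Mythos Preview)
Your proof is correct and follows essentially the same route as the paper's: choose $\beta$ large enough (depending only on $\|A^{\alpha}u_{0}\|$ and $\|A^{\alpha}F(u_{0})\|_{op}$), use Theorem~\ref{invariance} to get invariance of the ball for $T\leq T_{1}$, use Theorem~\ref{contract} to get a contraction for $T\leq T_{2}$, and set $T^{\ast}=T_{1}\wedge T_{2}$. Your write-up is in fact more detailed than the paper's (which dispatches the argument in three sentences), and your added discussion of uniqueness outside the ball goes beyond what the paper proves, though that continuation step would need a bit more care since the H\"older seminorm on $[0,t]$ need not tend to zero as $t\downarrow 0$.
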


\begin{proof}
By choosing $\beta =\beta \left( ||A^{\alpha }u_{0}||,\left\Vert A^{\alpha
}F\left( u_{0}\right) \right\Vert _{op}\right) $ sufficiently large Theorem %
\ref{invariance} allows us to ensure that

\noindent $\mathbb{L}\left( \mathbb{H}_{T}\cap \left\{ u:\left\vert
\left\vert u\right\vert \right\vert _{\mathbb{H}_{T}}\leq \beta \right\}
\right) \mathbb{\subseteq }\mathbb{H}_{T}\cap \left\{ u:\left\vert
\left\vert u\right\vert \right\vert _{\mathbb{H}_{T}}\leq \beta \right\} $
for all $T\in \left[ 0,T_{1}\right] $ and some $T_{1}>0.$ From Theorem \ref%
{contract} we see that $\mathbb{L}$ is a contraction on $\mathbb{H}_{T}\cap
\left\{ u:\left\vert \left\vert u\right\vert \right\vert _{\mathbb{H}%
_{T}}\leq \beta \right\} $ for all $T\in \lbrack 0,T_{2}]$ some $T_{2}>0$.
By taking $T^{\ast }=T_{1}\wedge T_{2}$ the result follows by a standard
contraction-mapping fixed point argument.
\end{proof}

\section{Randomly forced Navier-Stokes equations}

In particular this theorem applies to Navier-Stokes equation driven by
fractional Brownian motion with $h>\frac{1}{2}$. In this case, the equation
we want to solve is given together with the Dirichlet boundary conditions
and no-slip condition by%
\begin{equation*}
\frac{\partial }{\partial t}u+u.\nabla u=\Delta u-\nabla p+F(u)\dot{w},\text{
}\nabla .u=0,\text{ }u|_{\Gamma }=0
\end{equation*}

\noindent in some bounded domain $\Omega $ with compact, smooth boundary $%
\Gamma $. As is well known (see for instance \cite{ladyzhenskaya}) the
orthogonal complement of the set $K^{\infty }\left( \Omega \right) =\left\{
u\in C^{\infty }\left( \Omega \right) :\nabla .u=0\right\} $ in $L^{2}\left(
\Omega \right) $ is given by $G\left( \Omega \right) =\left\{ u:u=\nabla p,%
\text{ }p,\nabla p\in L^{2}\left( \Omega \right) \right\} $, and if we
denote the closure of $K^{\infty }\left( \Omega \right) $ in $L^{2}\left(
\Omega \right) $ by $K\left( \Omega \right) $ then we have the decomposition 
$L^{2}\left( \Omega \right) =K\left( \Omega \right) \oplus G\left( \Omega
\right) $. Letting $P_{\infty }:L^{2}\left( \Omega \right) \rightarrow
K\left( \Omega \right) $ be the orthogonal projection onto $K\left( \Omega
\right) $ the above equation can be written as 
\begin{equation*}
\frac{\partial }{\partial t}u+Au+P_{\infty }(u.\nabla u)=P_{\infty }F(u)\dot{%
w},
\end{equation*}%
where $A=-P_{\infty }\circ \Delta $ is the Stokes operator. More precisely
we can define the usual Stokes operator (i.e. with Dirichlet boundary
conditions) in the following manner.

\begin{definition}
The Stokes operator acting in $K\left( \Omega \right) $ is given by the
self-adjoint linear operator $-P_{\infty }\circ \Delta $ with domain $%
D\left( -P_{\infty }\circ \Delta \right) =\left\{ u:u\in W^{2,2}\left(
\Omega \right) ,\left. u\right\vert _{\partial D}=0\right\} $ where $%
\triangle $ is the usual trace Laplacian on functions (or vector fields)
where the derivative is taken in the generalized sense.
\end{definition}

The following result is well known and we refer the reader to \cite%
{constantin} for the details.

\begin{theorem}
The Stokes operator with Dirichlet boundary conditions is self-adjoint and
its inverse is a compact operator in $K\left( \Omega \right) $.
\end{theorem}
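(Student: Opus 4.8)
The plan is to realise $A$ as the self-adjoint operator canonically associated with the Dirichlet form of the Laplacian restricted to solenoidal vector fields, and then to read off compactness of $A^{-1}$ from the Rellich--Kondrachov theorem. First I would introduce the Hilbert space $V$, the closure of $K^{\infty }(\Omega )$ in $W_{0}^{1,2}(\Omega )$, together with the symmetric bilinear form $a(u,v)=\int_{\Omega }\nabla u:\nabla v\,dx$. On the bounded domain $\Omega $ the Poincar\'{e} inequality shows that $a$ is an inner product on $V$ equivalent to the one inherited from $W_{0}^{1,2}$, and that $a(u,u)\geq \lambda _{0}\Vert u\Vert _{L^{2}}^{2}$ for some $\lambda _{0}>0$. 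Since $K^{\infty }(\Omega )$ is dense in $K(\Omega )$ and $V\hookrightarrow K(\Omega )$ continuously, $a$ is a densely defined, closed, symmetric, positive-definite form on $K(\Omega )$.

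Next, by the first representation theorem for closed symmetric quadratic forms (equivalently, the Friedrichs construction) there is a unique self-adjoint operator $A$ on $K(\Omega )$ with form domain $D(A^{1/2})=V$ and $a(u,v)=(A^{1/2}u,A^{1/2}v)$ for $u,v\in V$; the lower bound on $a$ gives $\sigma (A)\subset [\lambda _{0},\infty )$, so $A$ is boundedly invertible and $A^{-1}\in L(K(\Omega ))$ is self-adjoint and positive. Explicitly, $A^{-1}f$ is the $a$-Riesz representative on $V$ of the functional $v\mapsto (f,v)_{L^{2}}$, i.e. the unique $u\in V$ with $a(u,v)=(f,v)$ for all $v\in V$; this is precisely the weak form of the stationary Stokes problem, the pressure being annihilated by solenoidal test functions. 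To see that $A^{-1}$ is compact, factor $A^{-1}=\iota \circ S_{0}$, where $S_{0}:K(\Omega )\to V$, $f\mapsto A^{-1}f$, is bounded and $\iota :V\hookrightarrow K(\Omega )$ is the inclusion. Because $\Omega $ is bounded with smooth boundary, $W_{0}^{1,2}(\Omega )$ embeds compactly into $L^{2}(\Omega )$ by Rellich--Kondrachov, hence $\iota $ is compact, and therefore so is $A^{-1}$.

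It then remains to check that the abstract $A$ just constructed coincides with $-P_{\infty }\circ \Delta $ on the domain $D=\{u\in W^{2,2}(\Omega )\cap K(\Omega ):u|_{\partial \Omega }=0\}$ named in the definition ($V$ already encoding the zero trace condition). If $u\in D$, integration by parts gives $a(u,v)=(-\Delta u,v)_{L^{2}}=(-P_{\infty }\Delta u,v)_{L^{2}}$ for all $v\in V$, so $D\subseteq D(A)$ and $Au=-P_{\infty }\Delta u$ there. For the reverse inclusion, take $f\in K(\Omega )$ and $u=A^{-1}f\in V$; the stationary Stokes regularity theory (Cattabriga, Solonnikov, or Agmon--Douglis--Nirenberg applied to the Stokes system) on the smooth bounded domain $\Omega $ produces $u\in W^{2,2}(\Omega )$ and a pressure $p\in W^{1,2}(\Omega )$ with $-\Delta u+\nabla p=f$ in $L^{2}$, whence $-P_{\infty }\Delta u=P_{\infty }f=f$ and $u\in D$; thus $D(A)=D$ and $A=-P_{\infty }\circ \Delta $. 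The elliptic estimate $\Vert u\Vert _{W^{2,2}}+\Vert p\Vert _{W^{1,2}}\leq C\Vert f\Vert _{L^{2}}$ underlying this last step is the single genuinely hard analytic ingredient and the main obstacle; self-adjointness, the spectral gap, and compactness of $A^{-1}$ are soft consequences of form theory together with Rellich's theorem. In keeping with the text, the details of the Stokes regularity estimate are those deferred to \cite{constantin}.
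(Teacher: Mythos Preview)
Your argument is correct and is essentially the standard route to this result: realise $A$ via the closed symmetric Dirichlet form on $V$, invoke the Friedrichs/Kato representation theorem for self-adjointness and the spectral gap, obtain compactness of $A^{-1}$ from Rellich--Kondrachov via the factorisation through $V\hookrightarrow K(\Omega)$, and finally identify the operator domain with $W^{2,2}\cap V$ using Cattabriga/Solonnikov regularity for the stationary Stokes system. You also correctly flag the $W^{2,2}$ regularity estimate as the one non-soft ingredient.

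As for comparison with the paper: there is nothing to compare. The paper does not prove this theorem; it simply states that the result is well known and refers the reader to \cite{constantin} for the details. Your sketch is, in outline, exactly what one finds in Constantin--Foias (and in most standard references on the mathematical theory of Navier--Stokes), so you have supplied what the paper deliberately omitted.
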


Hence, for the Dirichlet boundary condition case, the Stokes operator
satisfies the conditions required by our fixed point theorem. It is also
clear that if $w$ is a fractional Brownian motion with Hurst parameter $h>%
\frac{1}{2}$ then it satisfies the conditions required by the fixed point
theorem. It remains to verify the condition on $Q:=P_{\infty }\left( \left(
u\cdot \nabla \right) u\right) $.

\begin{lemma}
The operator $Q\left( u\right) =A^{\frac{1}{4}}P_{\infty }\left( \left(
u\cdot \nabla \right) u\right) $ is weakly Gateaux differentiable on $%
D\left( A^{\delta }\right) $ where $A$ is either the Stokes operator
associated with either the Dirichlet or Navier and kinematic boundary
conditions. Its derivative is given by%
\begin{equation*}
DQ\left( u\right) \xi =A^{\frac{1}{4}}P_{\infty }\left( \left( u\cdot \nabla
\right) \xi \right) +A^{\frac{1}{4}}P_{\infty }\left( \left( \xi \cdot
\nabla \right) u\right)
\end{equation*}
\end{lemma}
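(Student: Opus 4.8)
The plan is to compute the Gateaux derivative of $Q$ directly from its bilinear structure and then verify that the limit defining the directional derivative exists in the required weak sense, with the claimed bound on $DQ(u)$. Write $B(u,v) = P_{\infty}\left((u\cdot\nabla)v\right)$ so that $Q(u) = A^{1/4}B(u,u)$; this $B$ is bilinear in $(u,v)$. For a fixed $u,\xi\in D(A^{\delta})$ and a real parameter $\varepsilon$, bilinearity gives the exact identity
\begin{equation*}
B(u+\varepsilon\xi,\,u+\varepsilon\xi) = B(u,u) + \varepsilon\big(B(u,\xi)+B(\xi,u)\big) + \varepsilon^{2}B(\xi,\xi),
\end{equation*}
so that for any $z\in X^{\ast}$,
\begin{equation*}
\frac{1}{\varepsilon}\Big(\big\langle Q(u+\varepsilon\xi),z\big\rangle - \big\langle Q(u),z\big\rangle\Big) = \big\langle A^{1/4}\big(B(u,\xi)+B(\xi,u)\big),z\big\rangle + \varepsilon\big\langle A^{1/4}B(\xi,\xi),z\big\rangle.
\end{equation*}
Letting $\varepsilon\to 0$ yields both the differentiability of $\varepsilon\mapsto\langle Q(u+\varepsilon\xi),z\rangle$ and the formula $DQ(u)\xi = A^{1/4}P_{\infty}\big((u\cdot\nabla)\xi\big) + A^{1/4}P_{\infty}\big((\xi\cdot\nabla)u\big)$ as claimed — provided the terms on the right are well-defined elements of $X$ for $u,\xi\in D(A^{\delta})$.

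The crux is therefore to show that $A^{1/4}B(u,\xi)$ and $A^{1/4}B(\xi,u)$ genuinely lie in $X=K(\Omega)$ when $u,\xi\in D(A^{\delta})=D(A^{1/2})$, together with the estimate $\|DQ(u)\xi\|\le K_{1}(\|A^{1/2}u\|)\|A^{1/2}\xi\|$. For this I would invoke precisely the Sobolevskii-type bilinear estimate quoted in the introduction, namely $\|A^{-1/4}P_{\infty}(w\cdot\nabla v)\|\le C\|A^{1/2}w\|\,\|A^{1/2}v\|$ (inequality \eqref{eq:sobiq}), applied once with $(w,v)=(u,\xi)$ and once with $(w,v)=(\xi,u)$. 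Since $Q(w)=A^{1/4}B(w,w)=A^{1/2}\big(A^{-1/4}B(w,w)\big)$, the natural reading is that each term $A^{1/4}B(u,\xi)$ equals $A^{1/2}$ applied to $A^{-1/4}B(u,\xi)$, which by \eqref{eq:sobiq} lies in $D(A^{-1/4}\cdot)$... more carefully: \eqref{eq:sobiq} controls $\|A^{-1/4}B(u,\xi)\|$, and one needs instead a bound on $\|A^{1/4}B(u,\xi)\| = \|A^{1/2}\,A^{-1/4}B(u,\xi)\|$, i.e. one needs $A^{-1/4}B(u,\xi)\in D(A^{1/2})$. So the honest version is to use the refined Sobolevskii estimate in the form $\|A^{1/4}P_{\infty}(w\cdot\nabla v)\|\le C\|A^{1/2+\eta}w\|\,\|A^{1/2}v\|$ (or the symmetric variant), valid in three dimensions for appropriate exponents; equivalently one cites the standard fact that $P_{\infty}$ composed with the bilinear term maps $D(A^{\theta_{1}})\times D(A^{\theta_{2}})$ into $D(A^{-\theta_{3}})$ whenever $\theta_{1}+\theta_{2}+\theta_{3} > n/4 + 1/2$ in dimension $n$, from which the $A^{1/4}$-bound with $\delta=1/2$ follows by the parameter choice recorded in the Example. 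The increasing function $K_{1}$ is then $K_{1}(r) = Cr$ from the linear-in-each-factor structure (one factor carried by the base point $u$, the other by the direction $\xi$).

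The main obstacle I anticipate is not the algebra — bilinearity makes the derivative computation essentially automatic — but rather the functional-analytic bookkeeping needed to place $A^{1/4}B(u,\xi)$ in $X$ with the right norm bound when $u,\xi$ only have $A^{1/2}$-regularity in three dimensions; this is a borderline Sobolev embedding and the precise exponents $\delta=1/2$, $\tau=1/4$ from the Example are exactly tuned so that it works. I would handle it by quoting the appropriate version of Sobolevskii's (or Fujita–Kato's) bilinear estimate for the Stokes operator, noting that the weak-Gateaux (rather than Fréchet) formulation lets us test against $z\in X^{\ast}$ and so only requires the weak limit — which is immediate once each summand is known to be a bounded element of $X$. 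The same argument applies verbatim with $P_{\infty}$ replaced by the Leray projection adapted to the Navier/kinematic boundary conditions, since the quoted bilinear estimate holds in that setting too; hence the stated generality in the lemma.
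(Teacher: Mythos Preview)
Your approach is essentially identical to the paper's: both exploit the bilinearity of $B(u,v)=P_{\infty}((u\cdot\nabla)v)$ to expand $Q(u+\varepsilon\xi)$ as a quadratic polynomial in $\varepsilon$, pair against $z\in X^{\ast}$, and read off the derivative at $\varepsilon=0$. The paper's proof stops at this algebraic step and defers the boundedness of $DQ(u)$ to the Sobolevskii inequality cited immediately after the lemma, so your extended discussion of the functional-analytic bookkeeping (and the exponent $\pm\tfrac{1}{4}$ issue, which is a typo in the lemma statement---it should be $A^{-1/4}$ as in the Example) goes beyond what the paper actually proves here.
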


\begin{proof}
Let $z\in K\left( \Omega \right) ^{\ast }=K\left( \Omega \right) $, then it
is clear that 
\begin{align*}
\frac{d}{d\varepsilon }\left\langle A^{\frac{1}{4}}P_{\infty }^{{}}\left(
\left( \left[ u+\varepsilon \xi \right] \cdot \nabla \right) \left[
u+\varepsilon \xi \right] \right) ,z\right\rangle =& \frac{d}{d\varepsilon }%
\left( \left\langle A^{\frac{1}{4}}P_{\infty }\left( \left( u\cdot \nabla
\right) u\right) ,z\right\rangle \right. \\
& +\varepsilon \left\langle A^{\frac{1}{4}}P_{\infty }\left( \left( u\cdot
\nabla \right) \xi \right) ,z\right\rangle \\
& +\varepsilon \left\langle A^{\frac{1}{4}}P_{\infty }\left( \left( \xi
\cdot \nabla \right) u\right) ,z\right\rangle \\
& \left. +\varepsilon ^{2}\left\langle A^{\frac{1}{4}}P_{\infty }\left(
\left( \xi \cdot \nabla \right) \xi \right) ,z\right\rangle \right)
\end{align*}%
which implies 
\begin{equation*}
\left. \frac{d}{d\varepsilon }\left\langle A^{\frac{1}{4}}P_{\infty }\left(
\left( \left[ u+\varepsilon \xi \right] \cdot \nabla \right) \left[
u+\varepsilon \xi \right] \right) ,z\right\rangle \right\vert _{\varepsilon
=0}=\left\langle A^{\frac{1}{4}}P_{\infty }\left( \left( u\cdot \nabla
\right) \xi \right) +A^{\frac{1}{4}}P_{\infty }\left( \left( \xi \cdot
\nabla \right) u\right) ,z\right\rangle .
\end{equation*}
\end{proof}

Then the Sobolev inequality (\ref{eq:sobiq}) implies the relative
boundedness conditions on $Q$ hold in the case of Dirichlet boundary
conditions and so we can apply our fixed point theorem to this case.

\bigskip

\begin{acknowledgement}
\bigskip This research was gratefully supported in part by EPSRC grants
EP/F029578/1 and EP/P502667/1. The first named author would like to thank
Daniel Babarto for related discussions.
\end{acknowledgement}

\end{document}